\theoremstyle{plain}
\newtheorem{theorem}{Theorem}[section]
\newtheorem{lemma}[theorem]{Lemma}
\newtheorem{rem}[theorem]{Remark}
\DeclareMathOperator{\inte}{int}
\DeclareMathOperator{\diag}{diag}
\theoremstyle{definition}
\newtheorem{defn}[theorem]{Definition}
\begin{document}
\title{Interval hulls of $N$-matrices and almost $P$-matrices}

\author{Projesh Nath Choudhury}
\address[Projesh Nath ~Choudhury]{Department of Mathematics, Indian Institute of
	Science, Bangalore 560012, India}
\email{\tt projeshc@iisc.ac.in, projeshnc@alumni.iitm.ac.in}

\author{M. Rajesh Kannan}
\address[M. Rajesh~Kannan]{Department of Mathematics, Indian Institute of
	Technology Kharagpur, Kharagpur 721302, India}
\email{\tt rajeshkannan@maths.iitkgp.ac.in, rajeshkannan1.m@gmail.com}
\date{\today}
    \begin{abstract}
    	We establish a characterization of almost $P$-matrices via a sign non-reversal property. In this we are inspired by the analogous results for $N$-matrices. Next, the interval hull of two $m \times n$ matrices $A=(a_{ij})$ and $B = (b_{ij})$, denoted by $\mathbb{I}(A,B)$, is the collection of all matrices $C \in \mathbb{R}^{m \times n}$ such that each $c_{ij}$ is a convex combination of $a_{ij}$ and $b_{ij}$. Using the sign non-reversal property, we identify a finite subset of  $\mathbb{I}(A,B)$ that determines if all matrices in $\mathbb{I}(A,B)$ are $N$-matrices/almost $P$-matrices. This provides a test for an entire class of matrices simultaneously to be $N$-matrices/almost $P$-matrices. We also establish analogous results for semipositive and minimally semipositive matrices. These characterizations may be considered similar in
    spirit to that of $P$-matrices by Bia{\l}as--Garloff [\textit{Linear Algebra Appl.} 1984] and Rohn--Rex [\textit{SIMAX} 1996], and of positive definite matrices by Rohn [\textit{SIMAX} 1994].
    \end{abstract}
\subjclass[2010]{15B48 (primary), 15A24, 65G30 (secondary)}
\keywords{Sign non-reversal property, interval hull of matrices, $N$-matrices, almost $P$-matrices, semipositive matrices}
\maketitle
\section{Introduction}

Let $\mathbb{R}^{m \times n }$ denote the space of all real $m \times n$ matrices. For $A=(a_{ij}), B=(b_{ij}) \in \mathbb{R}^{m \times n }$, the interval hull of the matrices $A$ and $B$, denoted by  $\mathbb{I}(A,B)$, is
defined as follows:
\begin{center}
	$\mathbb{I}(A,B) = \{C \in \mathbb{R}^{m \times n}: c_{ij} = t_{ij} a_{ij} + (1 - t_{ij}) b_{ij}, t_{ij} \in [0,1]\}$.
\end{center}

If $A\neq B$, the interval hull
contains uncountably many matrices. One of the interesting questions,
related to interval hulls of matrices, considered in the literature is
the following:  Suppose a finite subset of matrices in $\mathbb{I}(A, B)$
has some property, say $\mathcal{S}$. Does the entire interval hull
$\mathbb{I}(A, B)$ have the property $\mathcal{S}$?  For example, it was shown in \cite{Ku71} that if the
matrices $A$ and $B$ are invertible and $A \leq B$ (entry wise), then all
the matrices in the interval hull $\mathbb{I}(A, B)$ are invertible if
and only if ($A,B$ are invertible and) $A^{-1}, B^{-1}$ are entrywise nonnegative. In \cite{rohn1}, the author considered the positive definiteness and stability of the interval hulls of matrices. For a collection of several matrix classes having interval hull characterizations, see the recent survey \cite{GAT16}. An interval hull $\mathbb{I}(A, B)$ is
\textit{of type} $\mathcal{S}$ if all the matrices in $\mathbb{I}(A,
B)$ are of type $\mathcal{S}$.

In this article, we provide necessary and sufficient conditions for an interval hull of matrices $\mathbb{I}(A,B)$ to be contained in one of the following classes: $N$-matrices, almost $P$-matrices, (minimally) semipositive matrices, by reducing it in each case to a finite set of test matrices.

\begin{defn}\label{maindefn}
	Let $m,n \geq 1$ be integers.
		\begin{enumerate}
		\item A matrix $A \in \mathbb{R}^{n \times n}$ is   \emph{an $N$-matrix} if all its principal minors are negative.
		\item A matrix $A \in \mathbb{R}^{n \times n} ~(n \geq 2)$ is an \emph{almost $P$-matrix} if all its proper principal minors are positive, and the determinant of $A$ is negative.
		\item  An $m \times n$ real matrix $A$ is a  \emph{semipositive matrix},  if there exists a vector $x \geq 0$ such that $Ax > 0$. An $m \times n$ real matrix $A$ is a  \emph{minimally semipositive matrix}, if it is semipositive and no column-deleted submatrix of $A$ is semipositive.
	\end{enumerate}
\end{defn}

$N$-matrices was introduced by Inada in 1971 \cite{I71}. These matrices have rich applications in univalence theory (injectivity of
differential maps in $\mathbb{R}^n$) and the Linear Complementarity
Problem \cite{par-rav-nmat}. Recently in \cite{PT20}, the first author in
joint work with Tsatsomeros has established an algorithm to detect
whether a given matrix is an $N$-matrix or not; as well as an algorithm
to construct every $N$-matrix recursively. The sign non-reversal property
for $N$-matrices was established by Mohan--Sridhar \cite{moh-sri-nmat-lcp}, and  Parthasarathy--Ravindran\cite{par-rav-nmat}. Coming to the other classes of matrices studied in this work: \textbf{(a)} The concept of almost $P$-matrices were introduced by Ky Fan in 1966 \cite{Fan66}. A characterization of almost $P$-matrices (with nonpsitive off diagonal entries) in terms of Linear Complementarity Problem was discussed by Miao \cite{M93}. \textbf{(b)} The notion of semipositive matrix was considered by Stiemke \cite{stiem} in connection with the problem of existence of positive solutions of linear systems. In 1994, Johnson, Kerr, and Stanford \cite{john-ker-stan-semi} introduced the notion of minimally semipositive matrices. These classes of matrices play a vital role in the study of $M$-matrices, in convergence theory for sets of matrices and in linear programming \cite{vander1972}. In this paper, we establish a characterization of almost $P$-matrices using the sign non-reversal property (Theorems \ref{alm-p-sec}, \ref{alm-p-1st}). We further obtain the interval hull characterization for all of these classes. Our results are summarized as follows:
\begin{center}
	
	\begin{tabular}{|c|c|l|l|c|}
		\hline
		
		{\bf Matrix Class} & {\bf Sign non-reversal} & { ~~~\bf $\mathbb{I}(A,B)$} & {\bf Testing set}~\\
		& ~~~~~~~\bf property&& {\bf for} $\mathbb{I}(A,B)$\\
		
			\hline $N$-matrices of the first&  ~\cite[Theorem 4.3]{moh-sri-nmat-lcp} & Theorem \ref{n hull 1st}&~$I_{z}, \ z \in \{ \pm 1 \}^n \setminus \{\pm e^J \}$\\
		
		category with respect to $J$ & && \\
		
		\hline $N$-matrices of the&  ~~\cite[Theorem 2]{par-rav-nmat} & Theorem \ref{n hull 2nd}&~$I_{z}, \ z \in \{ \pm 1 \}^n\setminus \{\pm e \}$\\
		
		second category & && \\
		
		\hline Almost $P$-matrices of the & &&\\
		first category with & ~~Theorem \ref{alm-p-1st} & Theorem \ref{almost p 1st hull} &~$I_{z}, \ z \in \{ \pm 1 \}^n $, $I_{P_J}$\\
		with respect to $J$ & && \\
		
		\hline Almost $P$-matrices & ~~Theorem \ref{alm-p-sec} & Theorem \ref{almst p sec hull}&~$I_{z}, \ z \in \{ \pm 1 \}^n$, $I_u$\\
		
		of the second category & && \\

		\hline Semipositive & ~~~~~~~N/A  & Theorem \ref{int-semi}&~$I_{l}$\\
		
		\hline Minimally semipositive  & ~~~~~~~N/A & Theorem \ref{int-semi}&~$I_{l}, I_{u}$\\
		
		\hline
		
	\end{tabular}
	\captionof{table}{Summary of results. Here, $J$ denotes a nonempty proper subset of $\langle n \rangle = \{1,\dots, n\}$.}\label{table1}
\end{center}

We conclude by explaining the notation used above.

\begin{defn}
Fix integers $m,n \geq 1$ and matrices $A, B \in \mathbb{R}^{m \times n}$, with interval hull $\mathbb{I}(A,B)$.
\begin{enumerate}
	\item Define the matrices
	\[
	(I_u)_{ij}:= \max\{a_{ij},b_{ij}\}, \qquad
	(I_l)_{ij}:= \min\{a_{ij},b_{ij}\}, \qquad
	I_c := \frac{B + A}{2}, \qquad
	\Delta := \frac{I_u - I_l}{2}.
	\]
	
	\item If $m=n$, given $z=(z_1,\dots,z_n)\in \{ \pm 1 \}^n$, define the matrices
	\[
	D_z :=\diag(z_1,\dots,z_n), \qquad
	I_z := I_c - D_z \Delta D_z.
	\]
\end{enumerate}
\end{defn}

This article is organized as follows: In section \ref{sec2}, we collect the needed definitions and known results.
 Section \ref{secnmat} contains  results for interval hull of $N$-matrices. In Section \ref{sec-alm-p-mat}, we establish a characterization of almost $P$-matrices via a sign non-reversal property and as a application, we study the interval hull of such matrices.  Section \ref{sec-semi} contains similar results for semipositive and minimally semipositive matrices.

\section{Preliminaries}\label{sec2}

We begin with notation, which will be used throughout the paper without
further reference.
 For a matrix $A \in \mathbb{R}^{m \times n}$, $A \geq 0 ~(A>0)$ signifies that all the components of the matrix $A$ are nonnegative (positive), and let $\vert A\vert := (\vert a_{ij}\vert)$.  For any positive integer $n$,
  define $\langle n \rangle := \{1,\dots, n\}$. Let $\mathbb{R}_\pm^n :=
  \{(x_1\dots,x_n) \in \mathbb{R}^{n}: \pm x_i \geq 0 ~\mbox{for all }~ i
  \in \langle n \rangle \}$.  For a subset $X$ of $\mathbb{R}^n$, let
  $\inte(X)$ denote the interior of $X$ in $\mathbb{R}^n$ with respect to
  the Euclidean metric. Let $e^i$ denote the vector whose $i$-th
  entry is $1$, and other entries are zero. 
   For $J\subseteq \langle n \rangle$, define $e^J \in\mathbb{R}^n$ such that $e^J_i=1$ for all $i \in J$ and $e^J_i=-1$ for all $i \notin J$. Also define $e^{\langle n\rangle}:= e$.

A square matrix $A$ is a \textit{$P$-matrix} if all its principal minors are positive. In \cite{gale-nikai-pmat}, Gale--Nikaid\^{o} established the sign non-reversal  property for $P$-matrices. \begin{theorem}[Sign non-reversal property]\label{snrp}
	A matrix $A \in \mathbb{R}^{n \times n}$ is a $P$-matrix if and only if $x \in \mathbb{R}^n$ and $x_i(Ax)_i \leq 0$ for all $i$ imply $x=0.$
\end{theorem}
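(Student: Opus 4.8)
The statement to be proved is the Gale--Nikaidô sign non-reversal property for $P$-matrices (Theorem~\ref{snrp}). Let me sketch a proof plan.

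\textbf{Proof plan.}

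The plan is to prove both directions separately, using the classical equivalence that $A$ is a $P$-matrix if and only if for every nonzero $x$ there exists an index $i$ with $x_i(Ax)_i > 0$ (which is a restatement of the contrapositive of the stated condition), together with an induction on $n$.

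For the forward direction, suppose $A$ is a $P$-matrix and, towards a contradiction, that there exists a nonzero $x \in \mathbb{R}^n$ with $x_i(Ax)_i \le 0$ for all $i$. I would argue by induction on $n$. If some coordinate $x_i = 0$, then setting $y = Ax$ we have $y_i = (Ax)_i$, and I can restrict attention to the principal submatrix $A'$ on the index set $\{j : x_j \ne 0\}$: the vector $x' = (x_j)_{j : x_j \ne 0}$ is nonzero and satisfies $x'_j (A'x')_j \le 0$ for all such $j$, because $(A'x')_j = (Ax)_j$ (the deleted coordinates of $x$ contribute nothing). Since every principal submatrix of a $P$-matrix is again a $P$-matrix, this contradicts the inductive hypothesis. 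So we may assume all $x_i \ne 0$. Now let $D = \diag(\operatorname{sign}(x_1), \dots, \operatorname{sign}(x_n))$; replacing $A$ by $DAD$ and $x$ by $Dx$ preserves the $P$-matrix property (signature similarity preserves all principal minors) and the sign condition, so we may assume $x > 0$. Then $x_i(Ax)_i \le 0$ with $x_i > 0$ forces $(Ax)_i \le 0$ for all $i$, i.e. $Ax \le 0$ with $x > 0$. The final step is to derive a contradiction from ``$x > 0$ and $Ax \le 0$'' for a $P$-matrix; this is where the main work lies (see below).

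For the converse, suppose the sign non-reversal condition holds but $A$ is not a $P$-matrix, so some principal minor is nonpositive. Among all principal submatrices with nonpositive determinant, pick one, say $A'$ on index set $S$, of \emph{smallest} size; then every proper principal submatrix of $A'$ has positive determinant, so $A'$ is a $P$-matrix except possibly at the top level, and in particular $\det A' \le 0$ with all proper principal minors of $A'$ positive. I claim there is a nonzero $x' $ supported on $S$ with $x'_j(A'x')_j \le 0$ for all $j \in S$ (padding with zeros gives the desired $x$ for $A$, since the zero coordinates trivially satisfy the inequality and $(Ax)_j$ for $j \in S$ equals $(A'x')_j$). To produce such an $x'$: if $\det A' = 0$, take $x'$ in the kernel, so $A'x' = 0$ and the inequalities hold trivially. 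If $\det A' < 0$, the idea is that $A'$ being a ``$P$-matrix of all proper minors plus negative determinant'' (an almost $P$-matrix, in the paper's terminology!) must reverse some sign: one can use a continuity/degree argument, or the explicit observation that $(A')^{-1}$ has a negative diagonal entry (since $((A')^{-1})_{jj} = \det(A' \text{ with row and column } j \text{ deleted})/\det A' < 0$), so taking $x' = (A')^{-1} e^j$ gives $A'x' = e^j$ with $x'_j < 0$, whence $x'_j(A'x')_j = x'_j < 0$ and $x'_k(A'x')_k = 0$ for $k \ne j$. This furnishes the required sign-reversing vector.

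\textbf{Main obstacle.} The crux is the forward direction's final step: showing no $P$-matrix $A$ admits $x > 0$ with $Ax \le 0$. The cleanest route is again induction: write $Ax = y \le 0$. If $y = 0$ then $\det A \ne 0$ forces $x = 0$, contradicting $x > 0$. Otherwise, express things via a Schur complement or eliminate one variable: solving the last equation $(Ax)_n = \sum_j a_{nj} x_j = y_n \le 0$ for $x_n$ (note $a_{nn} > 0$) and substituting reduces to the $(n{-}1)$-dimensional Schur complement $A/a_{nn}$, which is again a $P$-matrix, acting on the truncated positive vector --- but one must track that the resulting right-hand side stays $\le 0$, which uses $x_n > 0$ and $a_{nn} > 0$ and $y_n \le 0$ carefully. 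Alternatively, and perhaps more transparently, one invokes the known characterization that for a $P$-matrix the linear complementarity problem $\mathrm{LCP}(A, q)$ has a unique solution for every $q$, or one cites Fiedler--Pt\'ak; but since the paper is developing sign non-reversal from scratch, I would give the self-contained Schur complement induction. Managing the sign bookkeeping in that reduction --- ensuring each step preserves both the $P$-matrix property and the direction of all the relevant inequalities --- is the only genuinely delicate point.
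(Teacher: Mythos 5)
The paper never proves Theorem \ref{snrp}: it quotes it from Gale--Nikaid\^{o} \cite{gale-nikai-pmat}, so there is no internal argument to match and your proposal has to stand on its own. Most of it does. The converse direction is complete and correct: taking a minimal principal submatrix $A'$ (on an index set $S$) with $\det A'\le 0$, all of whose proper principal minors are then positive, using a kernel vector when $\det A'=0$, and using $x'=(A')^{-1}e^j$ when $\det A'<0$ (where $((A')^{-1})_{jj}$ is the determinant of the submatrix of $A'$ with row and column $j$ deleted divided by $\det A'$, hence negative), then padding with zeros, is the standard argument and is watertight. Likewise the forward-direction reductions (restriction to the support of $x$, and the signature similarity $A\mapsto DAD$, which preserves all principal minors) are correct.

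The genuine gap is exactly the step you flag as the crux: showing that a $P$-matrix admits no $x>0$ with $Ax\le 0$. The Schur-complement induction you sketch does not close as described: eliminating $x_n$ from $(Ax)_n=y_n$ replaces the remaining right-hand sides by $y_i-a_{in}y_n/a_{nn}$, and when $a_{in}>0$, $y_n<0$ and $y_i=0$ this quantity is strictly positive, so the inherited data is no longer of the form ``positive vector, nonpositive image'' and the induction stalls; no obvious choice of pivot repairs this, and Gale--Nikaid\^{o}'s own induction is substantially more delicate than a one-line Schur reduction. A short self-contained fix that slots into your outline right after the reduction to $x_i\neq 0$ for all $i$: set $\lambda_i:=-(Ax)_i/x_i$, which is $\ge 0$ by the sign condition, and $\Lambda:=\diag(\lambda_1,\dots,\lambda_n)$; then $(A+\Lambda)x=0$, whereas
\begin{equation*}
\det(A+\Lambda)\;=\;\sum_{S\subseteq\langle n\rangle}\Bigl(\prod_{i\in S}\lambda_i\Bigr)\det A_{S^\complement S^\complement}\;\ge\;\det A\;>\;0,
\end{equation*}
since every principal minor of the $P$-matrix $A$ is positive and every $\lambda_i\ge 0$ (the term $S=\langle n\rangle$ reads as $\prod_i\lambda_i$). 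This contradicts the singularity forced by $(A+\Lambda)x=0$ with $x\neq 0$, and with this substitution (or simply by citing \cite{gale-nikai-pmat}, as the paper does) your proposal becomes a complete proof.
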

Using the sign non-reversal property of $P$-matrices, in \cite{RJRG}, Rohn--Rex showed that the interval hull  of matrices $\mathbb{I}(A, B)$, where $A \leq B$, is a $P$-matrix, if a finite collection of matrices in  $\mathbb{I}(A, B)$ are $P$-matrices. Such a finite characterization of interval of $P$-matrices was first proved by Bia{\l}as and Garloff \cite{BGar84}, formulated in different terms.

In order to prove our results, we also require two basic lemmas. The first is a straightforward verification:

\begin{lemma}\label{int-lem1}
    Let $A,B \in \mathbb{R}^{m \times n}$. Then $I_l, I_u,
    \in \mathbb{I}(A,B)$. If $m=n$, then $I_z\in \mathbb{I}(A,B)$ for all $z\in \{ \pm 1 \}^n$.
\end{lemma}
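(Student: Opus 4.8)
The plan is to reduce everything to a single entrywise description of the interval hull. First I would observe that for real numbers $a,b$ the set $\{ta + (1-t)b : t \in [0,1]\}$ is precisely the closed interval $[\min\{a,b\},\max\{a,b\}]$; applying this coordinatewise gives
\[
\mathbb{I}(A,B) = \{C \in \mathbb{R}^{m\times n} : (I_l)_{ij} \le c_{ij} \le (I_u)_{ij} \text{ for all } i \in \langle m\rangle,\ j \in \langle n\rangle\}.
\]
With this reformulation in hand, checking membership in $\mathbb{I}(A,B)$ amounts to verifying a pair of entrywise inequalities.

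From here, $I_l, I_u \in \mathbb{I}(A,B)$ is immediate, since $(I_l)_{ij} \le (I_l)_{ij} \le (I_u)_{ij}$ and $(I_l)_{ij} \le (I_u)_{ij} \le (I_u)_{ij}$ hold by the definitions of $I_l$ and $I_u$. For the matrices $I_z$ (defined only when $m=n$), I would compute the entries directly: since $D_z$ is diagonal, $(D_z\Delta D_z)_{ij} = z_i z_j \Delta_{ij}$, and $\Delta_{ij} = \tfrac12\big((I_u)_{ij} - (I_l)_{ij}\big) = \tfrac12|a_{ij} - b_{ij}|$, so that
\[
(I_z)_{ij} = \frac{a_{ij}+b_{ij}}{2} - z_i z_j\,\frac{|a_{ij}-b_{ij}|}{2}.
\]
Because $z_i z_j \in \{+1,-1\}$, the right-hand side equals $(I_l)_{ij}$ when $z_i z_j = 1$ and equals $(I_u)_{ij}$ when $z_i z_j = -1$; in either case $(I_l)_{ij} \le (I_z)_{ij} \le (I_u)_{ij}$, whence $I_z \in \mathbb{I}(A,B)$.

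There is essentially no obstacle — as the statement already signals, this is a routine verification. The only points requiring a little care are the identity $\Delta_{ij} = \tfrac12|a_{ij} - b_{ij}|$ and keeping track of the two cases $z_i z_j = \pm 1$; it is worth noting that the matrices $I_l$, $I_u$, and each $I_z$ in fact turn out to be vertices of the box $\mathbb{I}(A,B)$, not merely interior points, which is what makes them natural candidates for a finite testing set later.
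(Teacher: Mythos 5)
Your proof is correct and is exactly the routine entrywise verification the paper has in mind (the paper states this lemma without proof, calling it a straightforward verification): identifying $\mathbb{I}(A,B)$ with the entrywise box $[(I_l)_{ij},(I_u)_{ij}]$ and noting $(I_z)_{ij}=(I_c)_{ij}-z_iz_j\Delta_{ij}$, which equals $(I_l)_{ij}$ or $(I_u)_{ij}$ according as $z_iz_j=\pm 1$, settles all the claims. No issues.
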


The next lemma is precisely \cite[Theorem 2.1]{RJRG}
. We provide the proof for completeness.
\begin{lemma}\label{rohn_exten2}

    Let $A,B \in \mathbb{R}^{n \times n}$ and $x \in \mathbb{R}^n$. Let
    $z \in \{ \pm 1 \}^n$ such that $z_i = 1$ if $x_i \geq 0$ and $z_i = -1$ if $x_i < 0$. If $C \in \mathbb{I}(A,B)$, then
    \begin{center}
        $x_i(Cx)_i\geq x_i (I_z x)_i \hbox{~~ for  all~} i\in \langle n \rangle.$
    \end{center}

\end{lemma}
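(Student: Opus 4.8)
The plan is to reduce the inequality to a coordinatewise (indeed entrywise) estimate and then verify it by a direct sign analysis, exactly along the lines of Rohn--Rex. Fix $i \in \langle n \rangle$. Writing out the $i$-th coordinate, we have $x_i(Cx)_i = \sum_{j=1}^n c_{ij} x_i x_j$ and $x_i(I_z x)_i = \sum_{j=1}^n (I_z)_{ij} x_i x_j$, so it suffices to show that for every pair $(i,j)$ one has $c_{ij} x_i x_j \ge (I_z)_{ij} x_i x_j$, i.e. $\bigl(c_{ij} - (I_z)_{ij}\bigr) x_i x_j \ge 0$. Recalling the definition $(I_z)_{ij} = (I_c)_{ij} - (D_z \Delta D_z)_{ij} = \tfrac{a_{ij}+b_{ij}}{2} - z_i z_j \tfrac{|a_{ij}-b_{ij}|}{2}$, the key observation is that $(I_z)_{ij}$ is the endpoint of the interval $[\,(I_l)_{ij}, (I_u)_{ij}\,]$ nearest to $(I_c)_{ij}$ "in the direction dictated by the sign of $x_i x_j$": if $x_i x_j \ge 0$ then $z_i z_j = 1$ and $(I_z)_{ij} = (I_l)_{ij} = \min\{a_{ij}, b_{ij}\}$, whereas if $x_i x_j < 0$ then $z_i z_j = -1$ and $(I_z)_{ij} = (I_u)_{ij} = \max\{a_{ij}, b_{ij}\}$.

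First I would record that, since $C \in \mathbb{I}(A,B)$, we have $(I_l)_{ij} \le c_{ij} \le (I_u)_{ij}$ for all $i,j$. Then I split into the two cases above. If $x_i x_j \ge 0$: here $z_i z_j = 1$ by the choice of $z$ (both $x_i, x_j \ge 0$, or both $< 0$, or one of them is $0$ — in the last case the product is $0$ and the inequality is trivial), so $(I_z)_{ij} = (I_l)_{ij} \le c_{ij}$, giving $c_{ij} - (I_z)_{ij} \ge 0$ and hence $\bigl(c_{ij} - (I_z)_{ij}\bigr) x_i x_j \ge 0$. If $x_i x_j < 0$: then $z_i z_j = -1$, so $(I_z)_{ij} = (I_u)_{ij} \ge c_{ij}$, giving $c_{ij} - (I_z)_{ij} \le 0$, and multiplying by the negative quantity $x_i x_j$ again yields $\bigl(c_{ij} - (I_z)_{ij}\bigr) x_i x_j \ge 0$. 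Summing over $j$ gives $x_i(Cx)_i = \sum_j \bigl(c_{ij} x_i x_j\bigr) \ge \sum_j \bigl((I_z)_{ij} x_i x_j\bigr) = x_i (I_z x)_i$, as required.

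The only mildly delicate point — and the place where one must be careful rather than where any real difficulty lies — is the bookkeeping with the sign function defining $z$, namely that $z_i z_j$ correctly captures the sign of $x_i x_j$ including the boundary cases where some $x_k = 0$; the convention $z_k = 1$ when $x_k \ge 0$ handles this cleanly, since then $x_i x_j = 0$ and both sides of the term-by-term inequality vanish. Everything else is the elementary identity $D_z \Delta D_z = \bigl(z_i z_j \Delta_{ij}\bigr)$ together with $\Delta_{ij} = \tfrac{1}{2}|a_{ij} - b_{ij}|$ and $(I_c)_{ij} = \tfrac{1}{2}(a_{ij}+b_{ij})$, from which $(I_c)_{ij} \pm \Delta_{ij}$ are exactly $(I_u)_{ij}$ and $(I_l)_{ij}$. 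No optimization or compactness argument is needed; the estimate is purely local in each coordinate.
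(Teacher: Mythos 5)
Your proof is correct and is essentially the same argument as the paper's: both rest on the entrywise bounds $I_l \le C \le I_u$ and on the observation that, for the sign vector $z$ of $x$, the entry $(I_z)_{ij}=(I_c)_{ij}-z_iz_j\Delta_{ij}$ equals $\min\{a_{ij},b_{ij}\}$ or $\max\{a_{ij},b_{ij}\}$ according to the sign of $x_ix_j$, so each term $\bigl(c_{ij}-(I_z)_{ij}\bigr)x_ix_j$ is nonnegative. The paper merely compresses your coordinatewise case analysis (including the boundary cases $x_k=0$) into the single estimate $\vert x_i((C-I_c)x)_i\vert \le \vert x_i\vert(\Delta\vert x\vert)_i$ together with the identity $\vert x\vert = D_z x$.
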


\begin{proof}
    Let  $C \in \mathbb{I}(A,B)$. Then $I_l\leq C \leq I _u$. Since $I_l= I_c- \Delta$ and $I_u= I_c + \Delta$, so \begin{center} $I_c -\Delta \leq C \leq I_{c} + \Delta $.\end{center} 
     Let $ x \in \mathbb{R}^n \setminus \{0\}$.  For fixed $1\leq i \leq n$, we have
    \begin{eqnarray}
    \vert   x_i((C-I_c)x)_i \vert & \leq &\vert x_i \vert (\vert C-I_c\vert \vert x \vert )_i
    \leq \vert x_i \vert ( \Delta  \vert x \vert )_i. \nonumber
    \end{eqnarray}
    Hence $$x_i (Cx)_i  \geq x_i (I_c x)_i - \vert x_i \vert ( \Delta  \vert x \vert )_i .$$
    Since $\vert x\vert=D_zx$, so $x_i (C x)_i \geq x_i((I_c - D_z \Delta D_z) x)_i$.
\end{proof}

\section{Results for $N$-matrices}\label{secnmat}

 We now characterize the interval hull property for $N$-matrices
(see Definition~\ref{maindefn}). First recall that an $N$-matrix $A$ is
of the \textit{first category} if it has at least one positive
entry. Otherwise, $A$ is of the \textit{second category}.

The following result gives a characterization for $N$-matrices of the
second category. This is known as the sign non-reversal property for the
$N$-matrices of the second category.

\begin{theorem}{\cite[Theorem 2]{par-rav-nmat}}\label{PR}
    Let $A \in \mathbb{R}^{n \times n}$. Then $A$ is an N-matrix of the second category if and only if $A<0$ and $A$ does not reverse the sign of any non-unisigned vector, that is, $x \in \mathbb{R}^n$ and  $x_i(Ax)_i\leq 0 $ for all $i$ imply $x \leq 0$ or $x \geq 0$.
\end{theorem}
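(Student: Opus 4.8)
The plan is to prove the two implications separately, using throughout the observation that both conditions in the statement restrict to principal submatrices. Indeed, if $x\in\mathbb R^n$ satisfies $x_i(Ax)_i\le 0$ for all $i$ and $S:=\{i:x_i\ne 0\}$ is its support, then $x$ vanishes off $S$, so $(Ax)_i=(A[S]\,x|_S)_i$ for $i\in S$, where $A[S]$ denotes the principal submatrix of $A$ on the index set $S$; hence $x|_S$ is a witness of the same kind for $A[S]$, and conversely any such witness for a principal submatrix extends by zeros to one for $A$. In particular every principal submatrix of a second-category $N$-matrix is again a second-category $N$-matrix (its principal minors form a subfamily, all negative, and the absence of positive entries is inherited), and if $A$ has the sign non-reversal property then so does every $A[S]$.

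For the implication ``$A<0$ and sign non-reversal $\Rightarrow$ second-category $N$-matrix'' I would induct on $n$, the case $n=1$ being immediate. For $n\ge 2$, each proper principal submatrix $A[S]$ inherits both hypotheses, hence by the inductive hypothesis is a second-category $N$-matrix, so every proper principal minor of $A$ is negative and only $\det A<0$ remains to be shown. Write $A=\bigl(\begin{smallmatrix}A_{11}&a_{12}\\ a_{21}^{\top}&a_{22}\end{smallmatrix}\bigr)$ with $A_{11}:=A[\langle n-1\rangle]$. Since $\det A_{11}<0$, the matrix $A_{11}$ is invertible and $\det A=\det A_{11}\bigl(a_{22}-a_{21}^{\top}A_{11}^{-1}a_{12}\bigr)$, so $\det A\ge 0$ would force the Schur complement $s:=a_{22}-a_{21}^{\top}A_{11}^{-1}a_{12}$ to be $\le 0$. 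Put $x^{\ast}:=\bigl(-A_{11}^{-1}a_{12},\,1\bigr)^{\top}$; then $(Ax^{\ast})_i=0$ for $i\le n-1$ and $(Ax^{\ast})_n=s\le 0$, so $x^{\ast}_i(Ax^{\ast})_i\le 0$ for all $i$, and $x^{\ast}\ne 0$ since $x^{\ast}_n=1$. Moreover $x^{\ast}$ is not unisigned: its last entry is positive, while $A_{11}^{-1}a_{12}$ has a positive entry --- otherwise $v:=A_{11}^{-1}a_{12}\le 0$, and then $A_{11}<0$ gives $A_{11}v\ge 0$, contradicting $A_{11}v=a_{12}<0$. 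This violates sign non-reversal, so $\det A<0$, which completes this direction.

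For the converse, one first checks $A<0$: the diagonal entries are $1\times 1$ principal minors, hence negative, and an off-diagonal $a_{ij}$ is nonpositive by the second-category hypothesis and cannot vanish, since $a_{ij}=0$ would give $\det A[\{i,j\}]=a_{ii}a_{jj}>0$. Suppose now $x\ne 0$ satisfies $x_i(Ax)_i\le 0$ for all $i$ but is not unisigned; passing to its support (again a second-category $N$-matrix), we may assume $x$ has full support, so $P:=\{i:x_i>0\}$ and $M:=\{i:x_i<0\}$ are nonempty and partition $\langle n\rangle$. Setting $z:=e^{P}$, $\tilde A:=D_zAD_z$ and $\tilde x:=D_zx$, one has $\tilde x>0$ and $\tilde x_i(\tilde A\tilde x)_i=x_i(Ax)_i\le 0$, hence $\tilde A\tilde x\le 0$; moreover $\det\tilde A[T]=\det A[T]<0$ for every $T$, while $\tilde A_{ij}=-a_{ij}>0$ for $i\in P$, $j\in M$, so $\tilde A$ is an $N$-matrix of the first category. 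Thus everything reduces to the claim: \emph{for a second-category $N$-matrix $A$ and any $z\in\{\pm 1\}^n\setminus\{\pm e\}$, the matrix $D_zAD_z$ sends no vector $y>0$ to a vector $\le 0$.}

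This last claim is the main obstacle, and I would prove it by induction on $n$. The base case $n=2$ is a short computation: with $B:=D_zAD_z$, which has negative diagonal entries, positive off-diagonal entries and negative determinant, the conditions $y>0$ and $By\le 0$ force the (necessarily positive) ratio $y_2/y_1$ into the interval $[\,b_{21}/|b_{22}|,\ |b_{11}|/b_{12}\,]$, which is nonempty only when $b_{12}b_{21}\le b_{11}b_{22}$, i.e.\ $\det B\ge 0$ --- a contradiction. For $n\ge 3$ the natural approach is to partition $B=D_zAD_z$ along the bipartition $\{z=1\}\sqcup\{z=-1\}$ of $\langle n\rangle$ (giving diagonal blocks with all entries negative and off-diagonal blocks with all entries positive) and to use the negativity of all principal minors of $B$ --- which quantitatively bounds the off-diagonal blocks against the diagonal ones --- to reduce the infeasibility of $\{\,y>0,\ By\le 0\,\}$ to a lower-dimensional instance of the same type. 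Making this precise is the technical core of the statement, and is essentially the content of \cite[Theorem 2]{par-rav-nmat}; equivalently, by a theorem of the alternative, the claim says that every such $D_zAD_z$ maps some nonzero nonnegative vector to a nonzero nonnegative vector.
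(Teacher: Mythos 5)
The paper does not actually prove this statement: it is quoted verbatim from Parthasarathy--Ravindran \cite{par-rav-nmat}, so there is no internal argument to compare yours against. Judged on its own, your proposal is only half a proof. The direction ``$A<0$ and sign non-reversal $\Rightarrow$ $N$-matrix of the second category'' is complete and correct: the restriction-to-support observation, the induction giving negative proper principal minors, and the Schur-complement witness $x^{\ast}=(-A_{11}^{-1}a_{12},1)^{\top}$ (which is genuinely non-unisigned because $A_{11}<0$, $a_{12}<0$ force $A_{11}^{-1}a_{12}$ to have a positive entry) are all sound, and this is a nice self-contained argument.

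The gap is in the converse. After the (correct) reductions $A<0$, full support, and conjugation by $D_z$, everything hinges on the claim that for a second-category $N$-matrix $A$ and $z\in\{\pm1\}^n\setminus\{\pm e\}$ the system $y>0$, $D_zAD_zy\le 0$ is infeasible. You verify this only for $n=2$; for $n\ge 3$ you offer a one-sentence plan (``use the negativity of all principal minors \dots to reduce to a lower-dimensional instance'') and then explicitly defer to \cite[Theorem 2]{par-rav-nmat}. But that claim \emph{is} the nontrivial content of the theorem being proved, so as written the proposal establishes only the easy implication. Within the framework of this paper the gap can be closed by invoking the other quoted background result, Theorem~\ref{nmat-fir-char}: since signature similarity preserves principal minors, $B:=D_zAD_z$ is an $N$-matrix of the first category with respect to $J=\{i: z_i=1\}$ (diagonal blocks negative, off-diagonal blocks positive), and a vector $y>0$ with $By\le 0$ satisfies $y_i(By)_i\le 0$ for all $i$, hence would be forced to have $y_J\le 0,\ y_{J^\complement}\ge 0$ or the reverse --- impossible because both $J$ and $J^\complement$ are nonempty. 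If instead you want a self-contained proof (which seems to be the point of the attempt), the induction step for $n\ge 3$ must actually be carried out; sketching it and citing the source does not suffice.
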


 For $J \subseteq \langle n \rangle$, $J^\complement$ henceforth denotes $\langle n
 \rangle \setminus J$. For subsets $I, J \subset \langle n \rangle$ with
 elements arranged in ascending order, $A_{IJ}$ denotes the submatrix of
 $A$ whose rows and columns are indexed by $I$ and $J$, respectively.

\begin{theorem}{\cite[Theorem 4.3]{moh-sri-nmat-lcp}} \label{nmat-fir-char1}
    Let $A\in \mathbb{R}^{n \times n}$ be an $N$-matrix of the first category. Then $A$ can be written in the partitioned form (after a principal rearrangement of its rows and columns, if necessary)
    \begin{equation}\label{eq1}
    \begin{pmatrix}
        A_{JJ} & A_{J J^\complement}\\
        A_{J^\complement J} & A_{J^\complement J^\complement}
	\end{pmatrix},
    \end{equation} with $A_{JJ} < 0$, $A_{J^\complement J^\complement}<0$, $A_{JJ^\complement } >0$, and $A_{J^\complement J}>0$, where $J$  is a nonempty proper subset of $ \langle n \rangle$.

\end{theorem}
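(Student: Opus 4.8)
The plan is to deduce the block structure directly from the sign pattern of $A$, which is in turn forced by the $1\times1$, $2\times2$ and $3\times3$ principal minors. Since every $a_{ii}$ is a $1\times1$ principal minor, $a_{ii}<0$ for all $i$. For distinct $i,j$, the $2\times2$ principal minor on $\{i,j\}$ is $a_{ii}a_{jj}-a_{ij}a_{ji}<0$; as $a_{ii}a_{jj}>0$ this forces $a_{ij}a_{ji}>a_{ii}a_{jj}>0$, so $a_{ij}$ and $a_{ji}$ are nonzero and share a common sign, which I record as $s_{ij}=s_{ji}\in\{\pm1\}$.

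The heart of the argument is a parity constraint extracted from the $3\times3$ principal minors. Fix distinct $i,j,k$ and expand the $3\times3$ principal minor $\det A_{\{i,j,k\}\{i,j,k\}}$ as the usual alternating sum of six products. The four terms that involve a diagonal entry of $A$ sum to
\[
a_{kk}\,(a_{ii}a_{jj}-a_{ij}a_{ji})\;-\;a_{jj}\,a_{ik}a_{ki}\;-\;a_{ii}\,a_{jk}a_{kj},
\]
and each of these three summands is positive: the first because $a_{kk}<0$ and the $2\times2$ minor $a_{ii}a_{jj}-a_{ij}a_{ji}$ is negative, the other two because $a_{ii},a_{jj}<0$ while $a_{ik}a_{ki},a_{jk}a_{kj}>0$. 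The remaining two terms are $a_{ij}a_{jk}a_{ki}$ and $a_{ik}a_{kj}a_{ji}$; both are nonzero, and since $s$ is symmetric both have the same sign $\varepsilon:=s_{ij}s_{jk}s_{ki}$. Hence $\det A_{\{i,j,k\}\{i,j,k\}}$ is strictly larger than $\varepsilon\bigl(|a_{ij}a_{jk}a_{ki}|+|a_{ik}a_{kj}a_{ji}|\bigr)$, and as this determinant is a principal minor of the $N$-matrix $A$, hence negative, we conclude $\varepsilon=-1$, i.e. $s_{ij}s_{jk}s_{ki}=-1$ for every triple of distinct $i,j,k$.

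It remains to turn this into a $2$-colouring. Put $c_1:=1$ and $c_i:=-s_{1i}$ for $i\neq1$. For distinct $i,j\neq1$ the parity identity $s_{1i}s_{ij}s_{1j}=-1$ gives $s_{ij}=-s_{1i}s_{1j}=-c_ic_j$, while $s_{1i}=-c_1c_i$ holds by definition; thus $s_{ij}=-c_ic_j$ for all $i\neq j$. Let $J:=\{i:c_i=1\}$ and $J^\complement=\langle n\rangle\setminus J=\{i:c_i=-1\}$. Then $a_{ij}<0$ exactly when $c_i=c_j$ and $a_{ij}>0$ exactly when $c_i\neq c_j$; together with $a_{ii}<0$, this says precisely $A_{JJ}<0$, $A_{J^\complement J^\complement}<0$, $A_{JJ^\complement}>0$, $A_{J^\complement J}>0$. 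Finally $1\in J$, so $J\neq\emptyset$; and $J=\langle n\rangle$ would force $A<0$, contradicting that $A$ is an $N$-matrix of the \emph{first} category, so $J$ is a nonempty proper subset of $\langle n\rangle$. Relabelling the indices so that $J=\{1,\dots,|J|\}$ --- a principal rearrangement, which permutes rows and columns simultaneously and hence preserves all principal minors and the $N$-matrix property --- brings $A$ into the stated partitioned form.

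The only step with genuine content is the sign analysis of the $3\times3$ minor; the key realisation is that the four ``diagonal'' terms of the expansion are all positive (using the $1\times1$ and $2\times2$ minor conditions), so the sign of the entire determinant is pinned down by the single parity $\varepsilon=s_{ij}s_{jk}s_{ki}$ of the two ``cyclic'' terms. One should take a little care that the $2\times2$ condition is used in the strong form $a_{ii}a_{jj}-a_{ij}a_{ji}<0$ (equivalently $|a_{ij}a_{ji}|>|a_{ii}a_{jj}|$), not merely $a_{ij}a_{ji}>0$; everything else --- the $1\times1$/$2\times2$ observations and the $2$-colouring combinatorics --- is routine.
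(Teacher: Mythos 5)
Your proof is correct. Note that the paper itself offers no argument for this statement: it is quoted verbatim from Mohan--Sridhar \cite[Theorem 4.3]{moh-sri-nmat-lcp}, so there is no in-paper proof to compare against. What you have supplied is a self-contained, purely elementary derivation of the sign pattern: the $1\times 1$ minors give negative diagonal, the $2\times 2$ minors give $a_{ij}a_{ji}>a_{ii}a_{jj}>0$ (so the symmetric sign function $s_{ij}$ is well defined), and your grouping of the $3\times 3$ expansion into four strictly positive ``diagonal'' terms plus two cyclic terms of common sign $\varepsilon=s_{ij}s_{jk}s_{ki}$ correctly forces $\varepsilon=-1$ for every triple; the $2$-colouring $c_1=1$, $c_i=-s_{1i}$ then yields $s_{ij}=-c_ic_j$ for all pairs (pairs through $1$ by definition, the rest via the triple $\{1,i,j\}$), which is exactly the stated block structure after a principal permutation, with $J\ni 1$ nonempty and $J\neq\langle n\rangle$ because the first-category hypothesis rules out $A<0$. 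The edge case $n=2$ is also covered, since there the colouring needs no triple identity at all. Your argument trades the original reference's machinery for a short parity/colouring argument driven only by principal minors of orders at most $3$, which makes the block decomposition essentially a combinatorial fact about the sign matrix $(s_{ij})$; this is a legitimate and arguably more transparent route than citing the LCP-flavoured development in \cite{moh-sri-nmat-lcp}.
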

\begin{defn}\label{defnn1stj}
Let $J$  be a nonempty proper subset of $ \langle n \rangle$.
An $N$-matrix $A$ is  \emph{an $N$-matrix of the first category
with respect to $J$} if  it is of the form  $\begin{pmatrix}
	A_{JJ} & A_{J J^\complement}\\
	A_{J^\complement J} & A_{J^\complement J^\complement} \\
	\end{pmatrix}$ with $A_{JJ} < 0$, $A_{J^\complement J^\complement}<0$, $A_{JJ^\complement } >0$, and $A_{J^\complement J}>0$.
\end{defn}
The next result gives a characterization for $N$-matrices of the first category with respect to $J$, and is known as the sign non-reversal property for such matrices.

\begin{theorem}{\cite[Theorem 4.3]{moh-sri-nmat-lcp}} \label{nmat-fir-char}
    Let $J$ be a nonempty proper subset of $\langle n \rangle$ and let $A = \begin{pmatrix}
        A_{JJ} & A_{J J^\complement}\\
        A_{J^\complement J} & A_{J^\complement J^\complement} \\
        \end{pmatrix}\in \mathbb{R}^{n \times n},$ where $A_{JJ} < 0$, $A_{J^\complement J^\complement}<0$, $A_{J J^\complement } >0$, and $A_{J^\complement J}>0.$ Then $A$  is an $N$-matrix of the first category with respect to $J$ if and only if  $A$ reverses the sign of a vector $x \in \mathbb{R}^n$, i.e, $x_i(Ax)_i\leq 0$ for all $i \in \langle n \rangle$, then either $x_J \leq 0$ and $x_{J^\complement} \geq 0$, or $x_J \geq 0$ and $x_{J^\complement} \leq 0.$

\end{theorem}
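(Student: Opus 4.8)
The plan is to reduce the statement to the sign non-reversal property for $N$-matrices of the second category, Theorem~\ref{PR}, by conjugating $A$ with the signature matrix attached to $J$. Set $z := e^J$ and $S := D_z = \diag(z_1,\dots,z_n)$, so that $S^2 = I$. First I would record two elementary facts about $SAS$. \textbf{(i) Sign pattern.} Since $(SAS)_{ij} = z_i a_{ij} z_j$, and $z_i z_j = 1$ exactly when $i,j$ lie in the same block ($J$ or $J^\complement$) while $z_i z_j = -1$ otherwise, the hypotheses $A_{JJ}<0$, $A_{J^\complement J^\complement}<0$, $A_{JJ^\complement}>0$, $A_{J^\complement J}>0$ together give $SAS < 0$ entrywise. \textbf{(ii) Invariance of principal minors.} For any $I \subseteq \langle n \rangle$ one has $(SAS)_{II} = S_{II} A_{II} S_{II}$ with $S_{II}$ a $\pm 1$ diagonal matrix, hence $\det (SAS)_{II} = \det(S_{II})^2 \det A_{II} = \det A_{II}$. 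Consequently $A$ is an $N$-matrix if and only if $SAS$ is an $N$-matrix; and since $SAS < 0$, such an $SAS$ is automatically an $N$-matrix of the second category. Because the block sign pattern of $A$ is already part of the hypothesis, Definition~\ref{defnn1stj} then shows that $A$ is an $N$-matrix of the first category with respect to $J$ if and only if $SAS$ is an $N$-matrix of the second category.

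Next I would translate the sign-reversal condition under the map $x \mapsto Sx$. Given $x \in \mathbb{R}^n$, put $y := Sx$; then $x = Sy$ and $(SAS)y = SAx$, so $\big((SAS)y\big)_i = z_i (Ax)_i$ while $y_i = z_i x_i$, whence $y_i\big((SAS)y\big)_i = z_i^2\, x_i(Ax)_i = x_i(Ax)_i$ for every $i \in \langle n \rangle$. Thus $A$ reverses the sign of $x$ precisely when $SAS$ reverses the sign of $y = Sx$. Moreover $y = Sx \geq 0$ means $z_i x_i \geq 0$ for all $i$, i.e. $x_J \geq 0$ and $x_{J^\complement} \leq 0$; symmetrically $y \leq 0$ means $x_J \leq 0$ and $x_{J^\complement} \geq 0$. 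Hence the statement ``every sign-reversing vector $y$ of $SAS$ is $\geq 0$ or $\leq 0$'' is literally the statement ``every sign-reversing vector $x$ of $A$ satisfies ($x_J \geq 0$, $x_{J^\complement}\leq 0$) or ($x_J\leq 0$, $x_{J^\complement}\geq 0$)''.

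Finally I would assemble the pieces. By Theorem~\ref{PR} applied to $SAS$, which satisfies $SAS<0$, the matrix $SAS$ is an $N$-matrix of the second category if and only if it does not reverse the sign of any non-unisigned vector, i.e. if and only if every sign-reversing vector of $SAS$ is $\geq 0$ or $\leq 0$. Feeding in the two equivalences of the previous paragraphs gives exactly the claimed characterization of $A$. The only substantive input is Theorem~\ref{PR}; everything else is bookkeeping about conjugation by $S$, and the single point that warrants care is verifying that this conjugation simultaneously produces the all-negative sign pattern and preserves every principal minor (hence the $N$-matrix property). Once that is checked, there is no real obstacle.
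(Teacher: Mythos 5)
Your argument is correct, and it is worth noting that the paper itself offers no proof to compare against: Theorem~\ref{nmat-fir-char} is quoted verbatim from Mohan--Sridhar \cite[Theorem 4.3]{moh-sri-nmat-lcp}, just as Theorem~\ref{PR} is quoted from Parthasarathy--Ravindran. What you have done is give a self-contained derivation of the first-category sign non-reversal property from the second-category one: conjugation by $S = D_{e^J}$ preserves every principal minor (so it preserves the $N$-matrix property), converts the hypothesized block sign pattern into $SAS<0$ entrywise, and intertwines sign-reversal for $A$ at $x$ with sign-reversal for $SAS$ at $y=Sx$, with $y\geq 0$ (resp.\ $y\leq 0$) translating exactly into $x_J\geq 0,\ x_{J^\complement}\leq 0$ (resp.\ $x_J\leq 0,\ x_{J^\complement}\geq 0$). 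Since the sign pattern is part of the hypothesis in both directions of the equivalence, invoking Theorem~\ref{PR} for $SAS$ closes the loop, and Definition~\ref{defnn1stj} makes ``$N$-matrix of the first category with respect to $J$'' equivalent to ``$N$-matrix'' under that hypothesis. The only dependence is on Theorem~\ref{PR}, which the paper also takes as known, so your reduction is a legitimate (and arguably cleaner) route: it shows the two cited sign non-reversal theorems are really one statement up to a signature similarity, rather than requiring a separate proof for the first-category case.
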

First, we  present a necessary condition for interval hull of $N$-matrices similar in spirit to that of Theorem \ref{PR} and \ref{nmat-fir-char1}. Notice that the inequality holds uniformly here.
\begin{theorem}\label{unieqn2}
	Let $A,B \in \mathbb{R}^{n \times n}$ such that  $\mathbb{I}(A,B)$ is an $N$-matrix of the second category. Then for each $x \in \mathbb{R}^n \setminus \{0\}$ with $x \ngeq 0$ and $ x\nleq 0$, there exists $i \in \langle n \rangle$ such that
	\begin{center}
		$x_i(Cx)_i>0$ for all $C\in \mathbb{I}(A,B).$
	\end{center}
\end{theorem}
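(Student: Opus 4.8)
The plan is to combine the sign non-reversal property of $N$-matrices of the second category (Theorem~\ref{PR}) with the pointwise comparison provided by Lemma~\ref{rohn_exten2}, exploiting that the comparison matrix $I_z$ belongs to $\mathbb{I}(A,B)$. Fix $x \in \mathbb{R}^n \setminus \{0\}$ with $x \ngeq 0$ and $x \nleq 0$, and choose $z = (z_1,\dots,z_n) \in \{\pm 1\}^n$ with $z_i = 1$ if $x_i \geq 0$ and $z_i = -1$ if $x_i < 0$, exactly as in Lemma~\ref{rohn_exten2}.

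First I would observe that $I_z \in \mathbb{I}(A,B)$ by Lemma~\ref{int-lem1}, so by hypothesis $I_z$ is an $N$-matrix of the second category. Since $x$ is non-unisigned (neither $x \leq 0$ nor $x \geq 0$), Theorem~\ref{PR} applied to $I_z$ forbids $x_i(I_z x)_i \leq 0$ from holding for all $i$; indeed, that condition would force $x$ to be unisigned. Hence there exists an index $i \in \langle n \rangle$ with $x_i (I_z x)_i > 0$.

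Next, for this same $z$ and this index $i$, Lemma~\ref{rohn_exten2} gives $x_i(Cx)_i \geq x_i(I_z x)_i$ for every $C \in \mathbb{I}(A,B)$. Combining with the strict inequality from the previous step yields $x_i(Cx)_i \geq x_i(I_z x)_i > 0$ for all $C \in \mathbb{I}(A,B)$, which is exactly the claim; note that the index $i$ is determined solely by $x$ (through $z$ and $I_z$), so the inequality indeed holds uniformly over $\mathbb{I}(A,B)$.

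There is essentially no hard step here: the argument is a direct synthesis of the two lemmas and Theorem~\ref{PR}, and the only thing to be careful about is that the single index $i$ is extracted from $I_z$ before quantifying over $C$, which is what delivers the uniformity asserted in the statement. The one place warranting a sanity check is the matching of the sign convention for $z$ between Lemma~\ref{rohn_exten2} and the present setup, and the fact that $x \neq 0$ (so that the case analysis defining $z$ is exhaustive and $I_z$ is well defined).
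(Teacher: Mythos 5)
Your proposal is correct and follows essentially the same route as the paper: choose $z$ from the sign pattern of $x$, use Lemma~\ref{int-lem1} to see $I_z \in \mathbb{I}(A,B)$ is an $N$-matrix of the second category, extract the index $i$ with $x_i(I_z x)_i > 0$ from Theorem~\ref{PR}, and then conclude uniformly via Lemma~\ref{rohn_exten2}. Nothing is missing.
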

\begin{proof}
	Let $x \in \mathbb{R}^n \setminus \{0\}$ with $x \ngeq 0$ and $x \nleq 0$.  Let
	$z \in \{ \pm 1 \}^n$ such that $z_i = 1$ if $x_i \geq 0$ and $z_i = -1$ if $x_i < 0$. By Lemma \ref{int-lem1}, $I_z$ is $N$-matrix of the second category. By Theorem \ref{PR}, there exists $i \in \langle n \rangle$ such that $x_i(I_z x)_i>0$. Thus by Lemma \ref{rohn_exten2}, $x_i(Cx)_i \geq x_i(I_z x)_i>0$ for all $C\in \mathbb{I}(A,B)$.
\end{proof}

\begin{rem}\label{uniremn1} The above result has an analogue for $N$-matrices of the first category with respect to $J$ for each $x \in \mathbb{R}^n \setminus \{0\}$ with $x_J \nleq 0$ or $x_{J^\complement} \ngeq 0$, and $x_J \ngeq 0$ or $x_{J^\complement} \nleq 0$. We leave the details to the interested reader.
	\end{rem}

We now characterize interval hulls of $N$-matrices by reducing it to a finite set of test matrices, beginning with those of the second category.
\begin{theorem}\label{n hull 2nd}
	Let $A,B \in \mathbb{R}^{n \times n}$ such that $\max\{a_{ii}, b_{ii}\}<0$ for all $i \in \langle n \rangle$.
	Then, $\mathbb{I}(A,B)$ is an $N$-matrix of the second category  if and only if $I_z$ is an $N$-matrix of the second category for all $z\in \{ \pm 1 \}^n\setminus \{\pm e\}$.
\end{theorem}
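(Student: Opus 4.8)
The plan is to deduce both directions from the sign non-reversal characterization in Theorem~\ref{PR}, using Lemmas~\ref{int-lem1} and~\ref{rohn_exten2} exactly as in Theorem~\ref{unieqn2}. The forward direction is essentially trivial: if $\mathbb{I}(A,B)$ is an $N$-matrix of the second category, then in particular every $I_z$ lies in $\mathbb{I}(A,B)$ by Lemma~\ref{int-lem1}, hence is an $N$-matrix of the second category; this already covers all $z\in\{\pm1\}^n$, not merely those outside $\{\pm e\}$. So the content is entirely in the converse.

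For the converse, assume $I_z$ is an $N$-matrix of the second category for every $z\in\{\pm1\}^n\setminus\{\pm e\}$, and take an arbitrary $C\in\mathbb{I}(A,B)$; I must show $C$ is an $N$-matrix of the second category. By Theorem~\ref{PR} this amounts to two things: (i) $C<0$, and (ii) whenever $x_i(Cx)_i\le 0$ for all $i$, the vector $x$ is unisigned ($x\ge 0$ or $x\le 0$). For (i): since $C\ge I_l$ entrywise, it suffices to know $I_l<0$. Now $I_l$ has the form $I_z$ for the sign vector $z$ with $z_i=1$ when $b_{ii}\le a_{ii}$... more carefully, $I_l=I_z$ for a suitable $z$, and if that $z$ is one of $\pm e$ I cannot directly invoke the hypothesis — this is exactly the role of the extra assumption $\max\{a_{ii},b_{ii}\}<0$. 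The off-diagonal entries of $I_l$ are off-diagonal entries of some $I_z$ with $z\notin\{\pm e\}$ (just flip one coordinate of $z$, which changes only the off-diagonal entries in that row/column — wait, flipping a coordinate of $z$ changes entries in row $i$ and column $i$). I need to argue that every off-diagonal entry $(I_l)_{ij}=\min\{a_{ij},b_{ij}\}$ equals $(I_z)_{ij}$ for some admissible $z\notin\{\pm e\}$; since $I_z$ is an $N$-matrix, that entry is negative. For a fixed off-diagonal position $(i,j)$, $(I_c-D_z\Delta D_z)_{ij}=(I_c)_{ij}-z_iz_j\Delta_{ij}$, so choosing $z_iz_j=1$ gives $(I_c)_{ij}-\Delta_{ij}=(I_l)_{ij}$; and the constraint $z_iz_j=1$ together with $n\ge 2$ always admits a choice of $z\notin\{\pm e\}$ (if $n\ge 3$ flip a third coordinate; if $n=2$ then $z_iz_j=1$ forces $z\in\{e,-e\}$, so here I instead use $\max\{a_{ij},b_{ij}\}<0$ reasoning or handle $n=2$ separately). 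The diagonal negativity of $I_l$ comes from $\max\{a_{ii},b_{ii}\}<0$, which forces $\min\{a_{ii},b_{ii}\}<0$ too. Assembling these, $I_l<0$, hence $C<0$.

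For (ii): suppose $x\in\mathbb{R}^n$ satisfies $x_i(Cx)_i\le 0$ for all $i$; assume toward a contradiction that $x$ is non-unisigned, so $x\ne 0$, $x\ngeq 0$, $x\nleq 0$. Define $z\in\{\pm1\}^n$ by $z_i=1$ if $x_i\ge 0$ and $z_i=-1$ if $x_i<0$; because $x$ is non-unisigned this $z$ lies in $\{\pm1\}^n\setminus\{\pm e\}$. By hypothesis $I_z$ is an $N$-matrix of the second category, so by Theorem~\ref{PR} (applied in the contrapositive: $x$ is non-unisigned) there exists $i$ with $x_i(I_zx)_i>0$. But Lemma~\ref{rohn_exten2} gives $x_i(Cx)_i\ge x_i(I_zx)_i>0$, contradicting $x_i(Cx)_i\le 0$. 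Hence $x$ is unisigned, establishing (ii), and therefore $C$ is an $N$-matrix of the second category. Since $C\in\mathbb{I}(A,B)$ was arbitrary, $\mathbb{I}(A,B)$ is of type ``$N$-matrix of the second category.''

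The main obstacle I anticipate is the bookkeeping in step (i) — specifically, making sure that the hypothesis $\max\{a_{ii},b_{ii}\}<0$ (rather than the full family $\{I_z\}$) is genuinely what is needed to pin down the signs of $I_l$'s diagonal, and carefully handling whether the off-diagonal entries of $I_l$ can always be realized inside some $I_z$ with $z\notin\{\pm e\}$, paying attention to the low-dimensional case $n=2$ where the excluded set $\{\pm e\}$ is proportionally large. Everything else is a direct transcription of the argument already used to prove Theorem~\ref{unieqn2}.
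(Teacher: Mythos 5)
Your part (ii) is correct and is essentially the paper's argument (the paper routes it through Theorem~\ref{unieqn2}, whose proof is exactly your computation: pick $z$ from the signs of the non-unisigned $x$, note $z\neq\pm e$, apply Theorem~\ref{PR} to $I_z$ and then Lemma~\ref{rohn_exten2}). The gap is in step (i): you argue ``since $C\ge I_l$ entrywise, it suffices to know $I_l<0$,'' but this inequality points the wrong way. From $C\ge I_l$ and $I_l<0$ you can conclude nothing about the sign of $C$; an arbitrary $C\in\mathbb{I}(A,B)$ can be as large as $I_u$ entrywise, so what you must show is $I_u<0$ and then use $C\le I_u$. All the subsequent bookkeeping (realizing entries of $I_l$ inside some $I_z$ with $z\notin\{\pm e\}$, the worry about $n=2$, the appeal to ``$\max\{a_{ij},b_{ij}\}<0$ reasoning'' for off-diagonal positions --- a hypothesis the theorem does not contain, since $\max\{a_{ii},b_{ii}\}<0$ is assumed only on the diagonal) is therefore aimed at the wrong matrix and does not close the argument.

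The repair is short and is what the paper does: for an off-diagonal position $(i,j)$ one has $(I_z)_{ij}=(I_c)_{ij}-z_iz_j\Delta_{ij}$, so choosing $z$ with $z_iz_j=-1$ (e.g.\ $z^i$ with $z^i_i=1$, $z^i_j=-1$ for $j\neq i$) gives $(I_z)_{ij}=(I_c)_{ij}+\Delta_{ij}=(I_u)_{ij}$; such $z$ has mixed signs, hence lies in $\{\pm1\}^n\setminus\{\pm e\}$ automatically, for every $n\ge 2$ --- note that targeting $I_u$ rather than $I_l$ also dissolves your $n=2$ difficulty, since you need $z_iz_j=-1$ rather than $z_iz_j=1$. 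Since each such $I_z$ is an $N$-matrix of the second category, it is entrywise negative, so all off-diagonal entries of $I_u$ are negative; the diagonal entries $(I_u)_{ii}=\max\{a_{ii},b_{ii}\}$ are negative by hypothesis (this is precisely the role of that hypothesis). Hence $I_u<0$ and $C\le I_u<0$, after which your part (ii) finishes the proof.
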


\begin{proof}
	Suppose $\mathbb{I}(A,B)$ is an $N$-matrix of the second category. By Lemma \ref{int-lem1}, $I_z$ is an $N$-matrix of the second category.
	 
	 Conversely, suppose that $I_z$ is an $N$-matrix of the second category for all $z\in \{ \pm 1 \}^n\setminus \{\pm e\}$. Let $C \in \mathbb{I}(A,B)$. First we show that $C<0$. For $i \in \langle n \rangle$, define $z^i\in \{ \pm 1 \}^n$ such that $z^i_i=1$ and $z^i_j=-1$ for $j \neq i$. Then $I_{z^i}<0$ for all $i \in \langle n \rangle$, since $I_{z^i}$ is an $N$-matrix of the second category. Thus $(I_u)_{ij}<0$ for $j \in \langle n \rangle$. Hence $C\leq I_u<0$. Let $x \in \mathbb{R}^n$ such that $x \ngeq 0$ and $x \nleq 0$. By Theorem \ref{unieqn2}, there exists  $i \in \langle n \rangle$ such that $x_i (C x)_i>0$. Hence $C$ is an $N$-matrix of the second category by Theorem \ref{PR}.
\end{proof}

 We next characterize the interval hull of $N$-matrices of the first category with respect to $J$, where $J\subseteq \langle n \rangle$.
\begin{theorem}\label{n hull 1st}
	Let $J$ be a nonempty proper subset of $\langle n \rangle$, and let $A,B \in \mathbb{R}^{n \times n}$ such that $\max\{a_{ii}, b_{ii}\}<0$ for all $i \in \langle n \rangle$. Then, $\mathbb{I}(A,B)$ is an $N$-matrix of the first category with respect to $J$  if and only if $I_z$ is an $N$-matrix of the first category with respect to $J$ for all $z\in \{ \pm 1 \}^n\setminus \{\pm e^J\}$.
\end{theorem}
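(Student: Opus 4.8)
The plan is to mirror the proof of Theorem~\ref{n hull 2nd}, with the role of the second-category characterization (Theorem~\ref{PR}) played by the first-category-with-respect-to-$J$ characterization (Theorem~\ref{nmat-fir-char}), and the role of the ``unisigned'' sign vectors $\pm e$ played by $\pm e^J$. The forward implication is immediate from Lemma~\ref{int-lem1}: if $\mathbb{I}(A,B)$ is an $N$-matrix of the first category with respect to $J$, then each $I_z$ with $z\in\{\pm 1\}^n$ lies in $\mathbb{I}(A,B)$ and hence is an $N$-matrix of the first category with respect to $J$; this holds in particular for $z\neq\pm e^J$.

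For the converse I fix an arbitrary $C\in\mathbb{I}(A,B)$ and verify the two requirements of Theorem~\ref{nmat-fir-char} for $C$. \textbf{Step 1: the block sign pattern} $C_{JJ}<0$, $C_{J^\complement J^\complement}<0$, $C_{JJ^\complement}>0$, $C_{J^\complement J}>0$. For a diagonal entry, $c_{ii}\le (I_u)_{ii}=\max\{a_{ii},b_{ii}\}<0$ by hypothesis. For $i\neq j$, I use that $(I_z)_{ij}=(I_c)_{ij}-z_iz_j\Delta_{ij}$ equals $(I_u)_{ij}$ when $z_i\neq z_j$ and $(I_l)_{ij}$ when $z_i=z_j$: in each of the four position-cases ($i,j$ both in $J$, both in $J^\complement$, or split between $J$ and $J^\complement$) I choose $z\in\{\pm1\}^n$ realizing the relation between $z_i$ and $z_j$ that exposes the extreme entry of the desired sign, observe that this $z$ is automatically different from $\pm e^J$ (because $e^J$ and $-e^J$ realize the \emph{opposite} relation between coordinates $i$ and $j$ in every one of the four cases), invoke the hypothesis that $I_z$ is an $N$-matrix of the first category with respect to $J$ to read off the sign of $(I_z)_{ij}$, and sandwich $c_{ij}$ between $(I_l)_{ij}$ and $(I_u)_{ij}$. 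For instance, for $i,j\in J$ with $i\neq j$ take $z_i=1\neq z_j=-1$; then $z\neq\pm e^J$, so $(I_u)_{ij}=(I_z)_{ij}<0$, whence $c_{ij}\le(I_u)_{ij}<0$; the other three cases are analogous, using $(I_l)_{ij}=(I_z)_{ij}>0$ when the target sign is positive.

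\textbf{Step 2: the sign non-reversal property.} Let $x\in\mathbb{R}^n$ with $x_i(Cx)_i\le0$ for all $i$; I may assume $x\neq0$, since otherwise the conclusion of Theorem~\ref{nmat-fir-char} is trivial. Let $z\in\{\pm1\}^n$ be the sign vector of $x$ ($z_i=1$ if $x_i\ge0$, else $z_i=-1$). If $z=e^J$ then $x_J\ge0$ and $x_{J^\complement}\le0$, and if $z=-e^J$ then $x_J\le0$ and $x_{J^\complement}\ge0$, so in either case the desired alternative already holds; hence I may assume $z\neq\pm e^J$, which makes $I_z$ an $N$-matrix of the first category with respect to $J$ by hypothesis. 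By Lemma~\ref{rohn_exten2}, $x_i(I_zx)_i\le x_i(Cx)_i\le0$ for all $i$, so Theorem~\ref{nmat-fir-char} applied to $I_z$ yields $x_J\le0,\ x_{J^\complement}\ge0$ or $x_J\ge0,\ x_{J^\complement}\le0$. Combining Steps~1 and~2, Theorem~\ref{nmat-fir-char} gives that $C$ is an $N$-matrix of the first category with respect to $J$; as $C\in\mathbb{I}(A,B)$ was arbitrary, $\mathbb{I}(A,B)$ is of this type.

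The one point that needs genuine care is Step~1: one must check, pair of indices by pair of indices and case by case, that the sign vector needed to expose the correct extreme entry of $I_z$ can always be chosen outside $\{\pm e^J\}$ — this is precisely the combinatorial reason the test set omits exactly $\pm e^J$, and once it is verified the rest is a routine transcription of the argument for Theorem~\ref{n hull 2nd}.
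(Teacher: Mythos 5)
Your proof is correct and follows essentially the same route as the paper's: the forward direction via Lemma \ref{int-lem1}, and the converse by first extracting the block sign pattern of an arbitrary $C\in\mathbb{I}(A,B)$ from suitably chosen test matrices $I_z$ (the paper uses the specific vectors $z^i$, you argue pairwise, and you make explicit the use of the diagonal hypothesis $\max\{a_{ii},b_{ii}\}<0$, which the paper leaves implicit) and then combining Lemma \ref{rohn_exten2} with Theorem \ref{nmat-fir-char}. The only cosmetic difference is that you run the sign non-reversal step directly, treating the sign vectors $\pm e^J$ separately, whereas the paper argues contrapositively for vectors whose sign pattern already excludes $\pm e^J$.
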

\begin{proof}
	The `if' part is immediate from Lemma \ref{int-lem1}.
	Conversely, suppose that $I_z$ is an $N$-matrix of the first category with respect to $J$ for all $z\in \{ \pm 1 \}^n\setminus \{\pm e^J\}$. Let $C \in \mathbb{I}(A,B)$. First we show that $C$ can be partitioned as $C = \begin{pmatrix}
		C_{JJ} & C_{J J^\complement}\\
		C_{J^\complement J} & C_{J^\complement J^\complement} \\
		\end{pmatrix},$ where $C_{JJ}, C_{J^\complement J^\complement}< 0$, and  $C_{J J^\complement }, C_{J^\complement J }>0$. For $i \in \langle n \rangle$, define $z^i\in \{ \pm 1 \}^n$ as follows: $z^i_i=1$ and if $i \in J$ and $j \neq i$ then 
		\[
		z^i_j=\left\{
		\begin{array}{cc}
		-1 &\hspace{5mm} j \in J,\\
		1 &\hspace{5mm} \mbox{otherwise}.
		\end{array}
		\right.
		\]
		If $i \notin J$ and $j \neq i$ then \[
		z^i_j=\left\{
		\begin{array}{cc}
		1 &\hspace{5mm} j \in J,\\
		-1 &\hspace{5mm} \mbox{otherwise}.
		\end{array}
		\right.
		\]
 Since $I_{z^i}$ is an $N$-matrix of the first category with respect to $J$ for all $i \in \langle n \rangle$, so  $I_{z^i}$ can be partitioned as $I_{z^i} = \begin{pmatrix}
		{I_{z^i}}_{JJ} & {I_{z^i}}_{J J^\complement}\\
		{I_{z^i}}_{J^\complement J} & {I_{z^i}}_{J^\complement J^\complement} \\
		\end{pmatrix},$ where ${I_{z^i}}_{JJ}, {I_{z^i}}_{J^\complement J^\complement}< 0$, and  ${I_{z^i}}_{J J^\complement }, {I_{z^i}}_{J^\complement J }>0$. Thus ${I_{u}}_{JJ}, {I_{u}}_{J^\complement J^\complement}< 0$, and  ${I_{l}}_{J J^\complement }, {I_{l}}_{J^\complement J }>0$. Hence  $C_{JJ}, C_{J^\complement J^\complement}< 0$, and  $C_{J J^\complement }, C_{J^\complement J }>0$. Let $x \in \mathbb{R}^n$ such that $x_J \nleq 0$ or $x_{J^\complement} \ngeq 0$, and $x_J \ngeq 0$ or $x_{J^\complement} \nleq 0.$ By Lemma \ref{rohn_exten2}, there exists  $z\in \{ \pm 1 \}^n\setminus \{\pm e^J\}$ such that $x_i (C x)_i \geq x_i(I_z x)_i$, for $i\in \langle n \rangle$. Since $I_z$ is an $N$-matrix of the first category with respect to $J$,  by Theorem \ref{nmat-fir-char},  there exists  $i \in \langle n \rangle$ such that $0<x_i(I_z x)_i<x_i (C x)_i$. Thus $C$ is an $N$-matrix of the first category with respect to $J$ by Theorem \ref{nmat-fir-char}.
\end{proof}

\section{Results for almost $P$-matrices}\label{sec-alm-p-mat}

We now establish the sign non-reversal property for almost $P$-matrices
(see Definition \ref{maindefn}) and characterize their interval hull.
Recall that an $n \times n ~(n\geq 2)$ matrix $A$ is an almost $P$-matrix
if and only if $A^{-1}$ is an $N$-matrix \cite[Lemma 2.4]{koji-sai-1979}.
Motivated by this result, we classify almost $P$-matrices into two categories:
\begin{defn}\label{defnalmost p}
	Let $n\geq 2$ be an integer.
	\begin{itemize}
		\item[(i)] Let $J$  be a nonempty proper subset of $ \langle n \rangle$. An almost $P$-matrix $A$ is \emph{an almost $P$-matrix of the first category with respect to $J$} if $A^{-1}$ is an $N$-matrix of the first category with respect to $J$.
		\item[(ii)] An almost $P$-matrix $A$ is an \emph{almost $P$-matrix of the second category} if $A^{-1}$ is an $N$-matrix of the second category.
	\end{itemize}
\end{defn}
Observe that if $A$ is an almost $P$-matrix of the second category, then there exists a
positive vector $x$ such that $Ax<0$. Our next result shows a sign
non-reversal property for such matrices.

\begin{theorem}\label{alm-p-sec}
    Let $A \in \mathbb{R}^{n \times n}$. Then, $A$ is an almost $P$-matrix of the second category if and only if the following hold:

    \begin{enumerate}
     \item[(a)] $N(A) \cap  \inte (\mathbb{R}_+^n) = \emptyset$, where
 $N(A)$  denotes the null space of $A$,
     \item[(b)] $x_i(Ax)_i \leq 0$ for all $i \in \langle n \rangle$ imply that $x =0$, if  $x_k =0$ for some $k$; otherwise $x > 0 $ or $x <0$,
     \item[(c)] $A (\inte (\mathbb{R}_+^n)) \cap \inte (\mathbb{R}_-^n) \neq \emptyset$.
    \end{enumerate}
\end{theorem}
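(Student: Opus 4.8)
The plan is to reduce everything to the inverse matrix. By Definition~\ref{defnalmost p}(ii) together with the cited equivalence \cite[Lemma 2.4]{koji-sai-1979} (an invertible $A$ is an almost $P$-matrix iff $A^{-1}$ is an $N$-matrix), the assertion ``$A$ is an almost $P$-matrix of the second category'' is the same as ``$A$ is invertible and $M:=A^{-1}$ is an $N$-matrix of the second category''. One then translates the sign non-reversal characterization of $N$-matrices of the second category (Theorem~\ref{PR} applied to $M$) into the conditions (a)--(c). The single identity behind the translation is that, on setting $y:=Ax$ so that $x=My$, one has $x_i(Ax)_i=(My)_iy_i=y_i(My)_i$; this, together with the elementary fact that a strictly negative matrix maps $\mathbb{R}_+^n\setminus\{0\}$ into $\inte(\mathbb{R}_-^n)$ and $\mathbb{R}_-^n\setminus\{0\}$ into $\inte(\mathbb{R}_+^n)$, is the only machinery needed.

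For the forward implication, suppose $A$ is an almost $P$-matrix of the second category. Then $A$ is invertible (its determinant is negative) and $M=A^{-1}$ is an $N$-matrix of the second category, so by Theorem~\ref{PR} we have $M<0$ and $M$ reverses the sign of no non-unisigned vector. Condition (a) is immediate since $N(A)=\{0\}$. For (c), take $x:=-Me$: from $M<0$ we get $Me<0$, so $x\in\inte(\mathbb{R}_+^n)$, while $Ax=-e\in\inte(\mathbb{R}_-^n)$, which witnesses (c). For (b), if $x_i(Ax)_i\le0$ for all $i$, put $y:=Ax$; then $y_i(My)_i\le0$ for all $i$, so Theorem~\ref{PR} gives $y\ge0$ or $y\le0$, whence $x=My$ equals $0$ (if $y=0$), is strictly negative (if $y\ge0$, $y\neq0$, using $M<0$), or is strictly positive (if $y\le0$, $y\neq0$) -- exactly the trichotomy in (b); in particular a zero coordinate of $x$ forces $x=0$.

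For the converse, assume (a)--(c). First I would show $A$ is invertible: if $x\in N(A)$ then $x_i(Ax)_i=0$ for all $i$, so (b) forces $x=0$, $x>0$, or $x<0$, and (a) applied to $x$ and to $-x$ excludes the last two possibilities, so $N(A)=\{0\}$. Set $M:=A^{-1}$. The main work is to prove $M<0$. I would introduce $U:=\{y\in\inte(\mathbb{R}_+^n):My\in\inte(\mathbb{R}_-^n)\}$ and show it is nonempty and clopen in the connected set $\inte(\mathbb{R}_+^n)$: it is open by continuity of $M$; it is nonempty by (c), since if $x_0\in\inte(\mathbb{R}_+^n)$ with $Ax_0\in\inte(\mathbb{R}_-^n)$ then $y_0:=-Ax_0$ satisfies $My_0=-x_0<0$; and it is closed in $\inte(\mathbb{R}_+^n)$ because if $y_m\to y\in\inte(\mathbb{R}_+^n)$ with $My_m<0$, then $My\le0$, and if $(My)_k=0$ we set $x:=My$, note $Ax=y$, so $x_i(Ax)_i=x_iy_i\le0$ for all $i$ while $x_k=0$, whence (b) gives $x=0$ and then $y=Ax=0$, contradicting $y\in\inte(\mathbb{R}_+^n)$. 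Connectedness then gives $U=\inte(\mathbb{R}_+^n)$, so $M(\inte(\mathbb{R}_+^n))\subseteq\inte(\mathbb{R}_-^n)$ and, taking closures, $M\le0$; and if some entry $M_{jk}$ were $0$, the same reasoning via (b) applied to $x:=Me^k$ (for which $Ax=e^k$ and $x_j=0$) would force $Me^k=0$, which is impossible, so $M<0$. Finally, $M$ reverses the sign of no non-unisigned vector: if $y_i(My)_i\le0$ for all $i$, then $x:=My$ satisfies $x_i(Ax)_i\le0$, so (b) gives $x=0$, $x>0$, or $x<0$, and since $x_iy_i\le0$ for all $i$ this forces $y\le0$ or $y\ge0$. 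By Theorem~\ref{PR}, $M$ is an $N$-matrix of the second category, and hence $A$ is an almost $P$-matrix of the second category by \cite[Lemma 2.4]{koji-sai-1979} and Definition~\ref{defnalmost p}(ii).

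I expect the clopen argument to be the only genuine obstacle. Condition (c) supplies just a single positive vector on which $M$ is strictly negative; propagating this to all of $\inte(\mathbb{R}_+^n)$, and thence to every entry of $M$, requires the connectedness of the open orthant together with a careful use of (b) through the substitution $Ax=y$ -- it is precisely this substitution that pins down the sign of the products $x_iy_i$ and makes (b) applicable on the limit. Once $M<0$ is secured, the remaining equivalences are a mechanical matching of (a)--(c) against the hypotheses of Theorem~\ref{PR}.
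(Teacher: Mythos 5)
Your proof is correct, but the converse is argued by a genuinely different route than the paper's. The paper works directly with $A$: it first shows every proper principal minor is positive by feeding zero-padded vectors into (b) and invoking the Gale--Nikaid\^o property (Theorem \ref{snrp}), then gets invertibility from (a)--(b), uses (c) to rule out $A$ being a $P$-matrix and hence concludes $\det A<0$ (so $A$ is almost $P$), and finally obtains $A^{-1}<0$ column by column, using the cofactor formula to see that each diagonal entry $(A^{-1})_{ii}$ is negative before applying (b) to $x=A^{-1}e_i$. You instead bypass minors and determinants entirely: after invertibility you verify the Parthasarathy--Ravindran hypotheses (Theorem \ref{PR}) for $M=A^{-1}$ directly, and the missing sign information that the paper extracts from cofactors is recovered by your clopen/connectedness argument on $\inte(\mathbb{R}_+^n)$, seeded by the single vector from (c) and propagated using (b) at boundary points, after which the sign non-reversal for $M$ and the lemma of Kojima--Saigal finish the job. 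Your forward direction is also slightly cleaner: you treat the case $x_k=0$ uniformly through $x=My$ and $M<0$, whereas the paper handles it separately via a proper principal submatrix being a $P$-matrix. What each approach buys: the paper's argument is purely linear-algebraic (no topology) and establishes the defining minor conditions of an almost $P$-matrix along the way at essentially no cost; yours is more uniform, funnelling everything through the inverse and Theorem \ref{PR}, at the price of the topological step needed to prove $M<0$ without determinant information, and of leaning on the inverse characterization of almost $P$-matrices (which the paper also uses, but only implicitly at the very end). Both are complete proofs.
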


\begin{proof}
    Let $A$ be an almost $P$-matrix of the second category. Then $(a)$ holds trivially. Let $x \in \mathbb{R}^n$ such that $x_i (Ax)_i \leq 0$ for all $i$. Suppose that  $x_k =0$ for some $k$. Let $B$ be the principal submatrix of $A$ obtained by deleting the $k$-th row and the $k$-th column of $A$. Let $y$ be the $(n-1)$-vector obtained from the vector $x$ by deleting the $k$-th entry. Then  $y_j (By)_j \leq 0$ for all $j$. Since $B$ is a $P$-matrix, so $y_j=0$ for all $j$.  Thus $x =0.$ Suppose that $x_i \neq 0$ for all $i$.  Let $y = Ax$. Then, $y_i(A^{-1}y)_i \leq 0$  for all $i$. Since $A^{-1}$ is an $N$-matrix of the second category, so, by Theorem \ref{PR}, either $y \geq 0$ or $y \leq 0$.  Note that $A^{-1}<0$. Hence, all the components of the vector $x = A^{-1} y$ are   either positive or negative. As $A$ is an almost $P$-matrix of the second category, there exists a positive vector $x$ such that $Ax<0$. Thus $(c)$ holds.

To prove the converse, first let us show that all the proper principal minors are positive.  Let $B$ be any $(n-1) \times (n-1) $ principal submatrix of $A$. Without loss of generality, assume that $B$ is obtained from  $A$ by deleting the last row and last column of $A$. Let $y \in \mathbb{R}^{n-1}$, and  define $x =  \begin{pmatrix}
            y\\
            0 \\
            \end{pmatrix}  $.
If $y_i(By)_i \leq 0$ for all $i$, then $x_i (Ax)_i \leq 0$ for all $i$.
Thus, by the assumption,  $x = 0$, and hence $y =0$. By Theorem
\ref{snrp}, all the proper principal minors of $A$ are positive. We now
claim that $A$ is invertible. Let $x$ be a vector such that $Ax=0$. Then either $x>0$ or $x<0$ or $x=0$. Since $N(A) \cap  \inte (\mathbb{R}_+^n) = \emptyset$, so $x =0$. Also $A$ is not a $P$-matrix, since $A (\inte (\mathbb{R}_+^n)) \cap \inte (\mathbb{R}_-^n) \neq \emptyset$, so $A$ reverses the sign of a nonzero vector. Thus $\det A < 0$, hence $A$ is an almost $P$-matrix. Let $x  = A^{-1} e_i$.
 Note that the $i$-th entry of the vector $x$ is negative. Now, we have  $x_j (Ax)_j \leq 0$ for all $j \in \langle n \rangle$. Thus the vector $x$ is entrywise negative, and hence $A^{-1} < 0$. So $A$ is an almost $P$-matrix of the second category.
 \end{proof}

 Using Theorem \ref{alm-p-sec}, we establish an equivalent condition for an  interval hull to be a subset of the set of all almost $P$-matrices of the second category.

\begin{theorem}\label{almst p sec hull}
    Let $A,B \in \mathbb{R}^{n \times n}$. Then $\mathbb{I}(A,B)$ is an almost $P$-matrix of the second category  if and only if $I_u$ and $I_z$ are almost $P$-matrices of the second category for all $z\in \{ \pm 1 \}^n$.
\end{theorem}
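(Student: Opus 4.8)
The plan is to characterize each $C \in \mathbb{I}(A,B)$ via the three conditions of Theorem~\ref{alm-p-sec}, using the comparison inequality of Lemma~\ref{rohn_exten2} for the sign non-reversal condition and the entrywise order $I_l \le C \le I_u$ for the other two. The `only if' direction is immediate: by Lemma~\ref{int-lem1}, each $I_z$ (for $z \in \{\pm 1\}^n$) and $I_u$ lies in $\mathbb{I}(A,B)$, so if every member of $\mathbb{I}(A,B)$ is an almost $P$-matrix of the second category then so are these finitely many matrices. For the `if' direction, fix $C \in \mathbb{I}(A,B)$; I will verify conditions (a), (b), (c) of Theorem~\ref{alm-p-sec} for $C$.

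Condition (b) is handled directly by the sign non-reversal machinery. Given $x \in \mathbb{R}^n$ with $x_i(Cx)_i \le 0$ for all $i$, choose $z \in \{\pm 1\}^n$ with $z_i = 1$ when $x_i \ge 0$ and $z_i = -1$ when $x_i < 0$, as in Lemma~\ref{rohn_exten2}; then $x_i(I_z x)_i \le x_i(Cx)_i \le 0$ for all $i$. Since $I_z$ is an almost $P$-matrix of the second category, condition (b) of Theorem~\ref{alm-p-sec} applied to $I_z$ gives exactly the required alternative for $x$ (namely $x = 0$ if $x_k = 0$ for some $k$, and $x > 0$ or $x < 0$ otherwise). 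All sign patterns $z$, including $z = \pm e$, may be needed here.

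Conditions (a) and (c) are not of sign non-reversal type, and this is where the extra test matrix $I_u$ enters. Since $I_u$ is an almost $P$-matrix of the second category, it is invertible and $I_u^{-1}$ is an $N$-matrix of the second category, hence $I_u^{-1} < 0$ entrywise by Theorem~\ref{PR}. For (c): set $v := -I_u^{-1} e$, so that $v > 0$ and $I_u v = -e < 0$; as $C \le I_u$ and $v \ge 0$, we get $C v \le I_u v = -e < 0$, so $Cv \in C(\inte(\mathbb{R}_+^n)) \cap \inte(\mathbb{R}_-^n)$, which is therefore nonempty. For (a): suppose $x \in N(C) \cap \inte(\mathbb{R}_+^n)$, that is, $x > 0$ and $Cx = 0$; since $I_u \ge C$ and $x \ge 0$ we have $I_u x \ge Cx = 0$, so $w := I_u x \ge 0$. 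If $w = 0$ then $x \in N(I_u) \cap \inte(\mathbb{R}_+^n)$, contradicting condition (a) of Theorem~\ref{alm-p-sec} for $I_u$; if $w \ne 0$ then $x = I_u^{-1} w < 0$ because $I_u^{-1} < 0$ and $w \ge 0$ with $w \ne 0$, contradicting $x > 0$. Hence $N(C) \cap \inte(\mathbb{R}_+^n) = \emptyset$. Having verified (a), (b), (c), Theorem~\ref{alm-p-sec} shows $C$ is an almost $P$-matrix of the second category, completing the argument.

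The main obstacle I anticipate is conditions (a) and (c): unlike (b), they are not captured by the comparison inequality of Lemma~\ref{rohn_exten2}, so one must instead exploit the entrywise order $C \le I_u$ together with the strict negativity of $I_u^{-1}$ (itself a consequence of $I_u$ being of the second category). A secondary point is to confirm that the stated testing set is precisely what the proof needs — all sign patterns $z$ for (b) and the genuinely new matrix $I_u$ for (a) and (c), noting that $I_u$ is in general not one of the $I_z$.
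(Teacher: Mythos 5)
Your proposal is correct and follows essentially the same route as the paper: the `only if' direction via Lemma \ref{int-lem1}, and the `if' direction by verifying conditions (a), (b), (c) of Theorem \ref{alm-p-sec} for each $C\in\mathbb{I}(A,B)$, using Lemma \ref{rohn_exten2} for (b) and the order $C\le I_u$ together with $I_u^{-1}<0$ for (a) and (c). Your explicit choice $v=-I_u^{-1}e$ for (c) and the separate treatment of the case $I_u x=0$ in (a) are only minor elaborations of the paper's argument.
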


\begin{proof}
    If $\mathbb{I}(A,B)$ is an almost $P$-matrix of the second category, then so are $I_u$ and $I_z$ by Lemma \ref{int-lem1}.

    Conversely, suppose that $I_u$ and $I_z$ are almost $P$-matrices of the second category for all $z\in \{ \pm 1 \}^n$. Let $C \in \mathbb{I}(A,B)$. First let us show that $N(C) \cap  \inte (\mathbb{R}_+^n)= \emptyset$. Let $x \in N(C) \cap  \inte (\mathbb{R}_+^n)$. Then $I_ux\geq 0$, since $I_u\geq C$ and $Cx=0$. Since $I^{-1}_u<0$, so $I^{-1}_u(I_ux)=x\leq 0$, a contradiction. Thus $N(C) \cap  \inte (\mathbb{R}_+^n)= \emptyset$.

     Since $I_u$ is an almost $P$-matrix of the second category, there exists a vector $x>0$ such that $I_ux<0$. Thus $Cx<0$, since $C \leq I_u$. Hence $C(\inte (\mathbb{R}_+^n)) \cap \inte (\mathbb{R}_-^n) \neq \emptyset$.

 Let $x \in \mathbb{R}^n$ be such that $x_i(Cx)_i\leq 0$ for $i \in \langle   n \rangle$. By Lemma \ref{rohn_exten2}, there exists a vector $z\in \{ \pm 1 \}^n$ such that $0\geq x_i (C x)_i \geq x_i(I_z x)_i$ for $i \in \langle   n \rangle$. Since $I_z$ is an almost $P$-matrix of the second category, by Theorem \ref{alm-p-sec}, $x=0$, if $x_k = 0$ for some $k$,  otherwise $x< 0$ or $x>0$. Thus, by Theorem \ref{alm-p-sec}, $C$ is an almost $P$-matrix of the second category.
\end{proof}

For a nonempty proper subset $ J$ of $ \langle n \rangle$, define
\begin{equation}
\mathbb{J}  = \mathbb{J}_J := \{x \in \mathbb{R}^n: x_j > 0
~\mbox{for all}~ j \in J ~\mbox{and}~ x_j <0 ~\mbox{for all}~ j \notin J
\}.
\end{equation}
Note that if $A$ is an almost $P$-matrix of the first category with respect to $J$, then there exists  $x \in \mathbb{J}$ such that $Ax \in -\mathbb{J}$. Next, we develop the sign non-reversal property for almost $P$-matrices of the first category with respect to $J$.

 \begin{theorem}\label{alm-p-1st}
 Let $A \in \mathbb{R}^{n \times n}$ and let $J$  be a nonempty proper subset of $ \langle n \rangle$. Then $A$ is an almost $P$-matrix of the first category with respect to $J$ if and only if
the following hold:

 \begin{enumerate}
 	\item[(a)] $N(A) \cap  \mathbb{J} = \emptyset$,
 	\item[(b)] $x \in \mathbb{R}^n$ and $x_i (Ax)_i \leq 0$ for all $i \in \langle n \rangle$ imply $x=0,$ if $x_k=0$  for some $k$; otherwise either $x\in \mathbb{J}$ or $-x \in \mathbb{J}$,
 	\item[(c)] $A (\mathbb{J}) \cap  (-\mathbb{J}) \neq \emptyset$.
 \end{enumerate}
 \end{theorem}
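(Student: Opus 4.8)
\textbf{Proof proposal for Theorem~\ref{alm-p-1st}.}

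The plan is to mirror the structure of the proof of Theorem~\ref{alm-p-sec}, replacing the role of $\mathbb{R}_+^n$ by the orthant-like cone $\mathbb{J} = \mathbb{J}_J$, and using the characterization that $A$ is an almost $P$-matrix of the first category with respect to $J$ exactly when $A^{-1}$ is an $N$-matrix of the first category with respect to $J$ (Definition~\ref{defnalmost p}(i)), together with the sign non-reversal property for such $N$-matrices (Theorem~\ref{nmat-fir-char}). For the forward direction, I would start from such an $A$ and its inverse $A^{-1}$, which is partitioned as in Definition~\ref{defnn1stj}. Condition~(a) is immediate: a nonzero element of $N(A)$ would force $A$ singular. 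For~(b), given $x$ with $x_i(Ax)_i \le 0$ for all $i$: if some $x_k = 0$, pass to the $(n-1)\times(n-1)$ principal submatrix $B$ of $A$ obtained by deleting row and column $k$ and the corresponding truncated vector $y$; since the proper principal minors of $A$ are positive, $B$ is a $P$-matrix, so Theorem~\ref{snrp} forces $y = 0$, hence $x = 0$. If all $x_i \ne 0$, set $y = Ax$; then $y_i(A^{-1}y)_i \le 0$, and since $A^{-1}$ is an $N$-matrix of the first category with respect to $J$, Theorem~\ref{nmat-fir-char} gives that $y_J$ and $y_{J^\complement}$ have opposite signs in the weak sense; multiplying by the block-sign structure of $A^{-1}$ (blocks $A^{-1}_{JJ}, A^{-1}_{J^\complement J^\complement} < 0$ and $A^{-1}_{JJ^\complement}, A^{-1}_{J^\complement J} > 0$) and using that $y \ne 0$ componentwise, one deduces $x = A^{-1}y \in \mathbb{J}$ or $-x \in \mathbb{J}$. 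Condition~(c) follows since $A^{-1}(-\mathbb{J}) \cap \mathbb{J} \ne \emptyset$ by the block-sign structure (any strictly sign-definite-by-block vector is mapped into $\mathbb{J}$), equivalently $A(\mathbb{J}) \cap (-\mathbb{J}) \ne \emptyset$.

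For the converse, assume (a)--(c). First I would extract positivity of all proper principal minors exactly as in Theorem~\ref{alm-p-sec}: embed an arbitrary $(n-1)$-vector $y$ into $x = (y^\top, 0)^\top$ (or the analogous embedding for any deleted index), apply~(b) to conclude $x = 0$ hence $y = 0$, and invoke Theorem~\ref{snrp} on the $(n-1)\times(n-1)$ principal submatrices. Next, invertibility: if $Ax = 0$ then by~(b) (the case $x_k = 0$ gives $x=0$; otherwise $x \in \mathbb{J}$ or $-x \in \mathbb{J}$, but then $x \in N(A) \cap \mathbb{J}$ contradicts~(a)), so $x = 0$. Since~(c) exhibits a nonzero sign-reversed vector, $A$ is not a $P$-matrix, so $\det A < 0$ and $A$ is an almost $P$-matrix. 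It remains to show $A^{-1}$ is an $N$-matrix of the first category with respect to $J$. Here I would examine the columns $x = A^{-1}e^i$. For $i \in J$ versus $i \notin J$, applying~(b) to $x$ (noting $x_i \ne 0$ so the second alternative holds) and tracking the sign of the $i$-th coordinate of $Ax = e^i$ pins down whether $x \in \mathbb{J}$ or $-x \in \mathbb{J}$, and reading off the signs of the entries of $A^{-1}$ column-by-column yields the required block-sign pattern $A^{-1}_{JJ}, A^{-1}_{J^\complement J^\complement} < 0$ and $A^{-1}_{JJ^\complement}, A^{-1}_{J^\complement J} > 0$. Combined with the sign non-reversal property already verified via~(b) (translated through $y = Ax$), Theorem~\ref{nmat-fir-char} then certifies $A^{-1}$ as an $N$-matrix of the first category with respect to $J$.

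The step I expect to be the main obstacle is the bookkeeping in the converse when reading off the block-sign structure of $A^{-1}$ from the vectors $A^{-1}e^i$: one must verify that for $i \in J$ the full vector $A^{-1}e^i$ lies in $-\mathbb{J}$ (so that its $j$-th coordinate is negative for $j \in J$ and positive for $j \notin J$, giving the correct signs in column $i$), and the opposite for $i \notin J$, and these choices must be globally consistent across all $i$. This requires carefully using~(b) to rule out the "$x_k = 0$" degeneracy for these particular $x$ and correctly orienting the dichotomy $x \in \mathbb{J}$ vs.\ $-x \in \mathbb{J}$ by inspecting the single known coordinate $(Ax)_i = \pm 1$; a secondary subtlety is confirming that~(c) alone (not just non-$P$-ness from~(b)) is what forces $\det A < 0$ rather than merely $\det A \ne 0$ with wrong sign, which one argues by continuity/Perron-type reasoning or by directly noting a sign-reversing vector for $A$ together with all proper minors positive forces the determinant negative. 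The forward direction and the proper-minor extraction are routine adaptations of Theorem~\ref{alm-p-sec}.
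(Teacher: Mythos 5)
Your proposal is correct and follows essentially the same route as the paper: the forward direction via $y=Ax$ and the sign non-reversal property of the $N$-matrix $A^{-1}$ (Theorem \ref{nmat-fir-char}), and the converse via the embedding argument for proper principal minors, conditions (a) and (c) for invertibility and non-$P$-ness (hence $\det A<0$), and the columns $A^{-1}e_k$ with condition (b) to read off the block-sign pattern of $A^{-1}$. The only cosmetic difference is that you certify $A^{-1}$ as an $N$-matrix of the first category with respect to $J$ by re-deriving its sign non-reversal property from (b), whereas the paper simply invokes that the inverse of an almost $P$-matrix is an $N$-matrix; both close the argument equally well.
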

 \begin{proof}
Let $A$ be an almost $P$-matrix of the first category with respect to $J$. Then,
$A^{-1} = \begin{pmatrix}
            B_{JJ} & B_{J J^\complement} \\
B_{J^\complement J} & B_{J^\complement J^\complement}\\
            \end{pmatrix}$, where $B_{JJ} < 0, ~B_{J^\complement J^\complement} < 0, ~B_{J J^\complement} >0$ and $B_{J^\complement J} > 0$. Let  $x_i(Ax)_i \leq 0.$ If $x_i =0$ for some $i$, then, by an  argument similar to that of Theorem \ref{alm-p-sec}, we get $x=0$.  So, let us assume that $x_i \neq 0$ and $x_i(Ax)_i \leq 0$ for each $i$. Let $y=Ax$. Then $y_i(A^{-1}y)_i \leq 0$ for each $i$.    Hence, by Theorem \ref{nmat-fir-char}, $y=
\begin{pmatrix}
            y_{J}\\
            y_{J^\complement}\\
            \end{pmatrix} $ where either $y_J \leq 0$, $y_{J^\complement} \geq 0$ or $y_J \geq 0$, $y_{J^\complement} \leq 0$. Also, $x=A^{-1}y = \begin{pmatrix}
            x_J\\
            x_{J^\complement}\\
            \end{pmatrix} $, so $x_J >0, x_{J^\complement} <0$ or $x_J <0, x_{J^\complement} >0.$ As $A$ is an almost $P$-matrix of the first category with respect to $J$, so there exists  $x \in \mathbb{J}$ such that $Ax \in -\mathbb{J}$. Thus $(c)$ holds.

Conversely, suppose that $A$ satisfies all the three properties as above. Let $C$ be any $(n-1) \times (n-1) $ principal submatrix of $A$. Without loss of generality, assume $C$ is obtained from  $A$ by deleting the last row and last column of $A$.
Let $y \in \mathbb{R}^{n-1}.$ Set $x= \begin{pmatrix} y\\ 0\\
\end{pmatrix}.$  If $y_i(Cy)_i \leq 0$ for all $i$, then $x_i
(Ax)_i \leq 0$ for all $i \in \langle n \rangle$. Thus, by assumption $x
= 0$, so $y =0$. By Theorem \ref{snrp}, all the proper principal minors of $A$ are
positive. Since $N(A) \cap \mathbb{J} = \emptyset$, so $A$ is invertible
by a similar argument to that of Theorem \ref{alm-p-sec}. As,  $A (\mathbb{J}) \cap
(-\mathbb{J}) \neq \emptyset$, so the matrix $A$ is not  a $P$-matrix by Theorem \ref{snrp}.
Thus  $A$ is an almost $P$-matrix. Let $x=A^{-1}e_k$. Then $x_j
(Ax)_j\leq 0$ for $j\in \langle n \rangle$. If $k\in J$, then
$(A^{-1}e_k)_J<0$ and $(A^{-1}e_k)_{J^\complement}>0$, since the $k$-th
entry of $x$ is negative. Otherwise $(A^{-1}e_k)_J>0$ and
$(A^{-1}e_k)_{J^\complement}<0$. Hence $A$ is an almost $P$-matrix of the
first category with respect to $J$.
\end{proof}

For a nonempty proper subset $J$ of  $\langle n \rangle$, define the
matrices
\begin{equation}
I_J := {\rm diag}((-1)^{1 (i \not\in J)} : i \in \langle n \rangle), \qquad
I_{P_J}:=I_c + I_J \Delta I_J.
\end{equation}
Also define $I_\emptyset := -{\rm Id}_n$. One can verify that  $I_{P_J}
\in \mathbb{I}(A,B)$.

In the following theorem, we characterize when an interval hull is a subset of the set of all almost $P$-matrices of the first category with respect to $J$.  
\begin{theorem}\label{almost p 1st hull}
 Let $A,B \in \mathbb{R}^{n \times n}$. Then $\mathbb{I}(A,B)$ is an almost $P$-matrix of the first category with respect to $J$ if and only if $I_{P_J}$ and $I_z$ are almost $P$-matrices  of the first category with respect to $J$ for all $z\in \{ \pm 1 \}^n$.
\end{theorem}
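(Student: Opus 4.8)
The plan is to follow the template of the proof of Theorem~\ref{almst p sec hull}, using the characterization of the relevant matrix class provided by Theorem~\ref{alm-p-1st} in place of Theorem~\ref{alm-p-sec}, and the block structure of Definition~\ref{defnn1stj} in place of the inequality $I_u^{-1}<0$. The ``only if'' direction is immediate, since $I_z\in\mathbb{I}(A,B)$ for all $z\in\{\pm1\}^n$ by Lemma~\ref{int-lem1}, and $I_{P_J}\in\mathbb{I}(A,B)$ as already observed. For the ``if'' direction I would fix an arbitrary $C\in\mathbb{I}(A,B)$ and verify the three conditions (a), (b), (c) of Theorem~\ref{alm-p-1st} for $C$: conditions (a) and (c) will be derived from the single test matrix $I_{P_J}$, and condition (b) from the test matrices $I_z$.

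The technical core is a one-sided sharpening of Lemma~\ref{rohn_exten2} adapted to vectors in $\mathbb{J}$. If $x\in\mathbb{J}$ then $|x|=I_J x$ and $|x_i|=(I_J)_{ii}x_i$, while $C\in\mathbb{I}(A,B)$ gives $|C-I_c|\le\Delta$; feeding these into the same triangle-inequality computation as in Lemma~\ref{rohn_exten2} yields, for every $i\in\langle n\rangle$,
\[
x_i(Cx)_i\ \le\ x_i(I_c x)_i+|x_i|(\Delta|x|)_i\ =\ x_i(I_c x)_i+x_i(I_J\Delta I_J x)_i\ =\ x_i(I_{P_J}x)_i,
\]
where the second equality uses that $I_J$ is diagonal. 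With this inequality, condition~(c) for $C$ follows at once: since $I_{P_J}$ is an almost $P$-matrix of the first category with respect to $J$, there is $x\in\mathbb{J}$ with $I_{P_J}x\in-\mathbb{J}$, and then for $i\in J$ we have $x_i>0$, $(I_{P_J}x)_i<0$, whence $x_i(Cx)_i<0$ and so $(Cx)_i<0$, while for $i\notin J$ we have $x_i<0$, $(I_{P_J}x)_i>0$, whence again $x_i(Cx)_i<0$ and so $(Cx)_i>0$; thus $Cx\in-\mathbb{J}$.

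For condition~(a), suppose $x\in N(C)\cap\mathbb{J}$. Then $x_i(Cx)_i=0$, so the displayed inequality gives $x_i(I_{P_J}x)_i\ge0$ for all $i$, i.e. $(I_{P_J}x)_J\ge0$ and $(I_{P_J}x)_{J^\complement}\le0$. Writing $x=(I_{P_J})^{-1}(I_{P_J}x)$ and using that $(I_{P_J})^{-1}$ is an $N$-matrix of the first category with respect to $J$ — hence has the block sign pattern of Definition~\ref{defnn1stj}, negative on the diagonal blocks and positive on the off-diagonal blocks — one reads off directly that $x_J\le0$, contradicting $x\in\mathbb{J}$ (recall $J\ne\emptyset$); so $N(C)\cap\mathbb{J}=\emptyset$. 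Finally, condition~(b) is verified exactly as in Theorem~\ref{almst p sec hull}: if $x_i(Cx)_i\le0$ for all $i$, then Lemma~\ref{rohn_exten2} produces $z\in\{\pm1\}^n$ (the sign vector of $x$) with $x_i(Cx)_i\ge x_i(I_z x)_i$, so $x_i(I_z x)_i\le0$ for all $i$; since $I_z$ is an almost $P$-matrix of the first category with respect to $J$, Theorem~\ref{alm-p-1st}(b) forces $x=0$ if some coordinate of $x$ vanishes, and $x\in\mathbb{J}$ or $-x\in\mathbb{J}$ otherwise. Having checked (a)--(c), Theorem~\ref{alm-p-1st} shows $C$ is an almost $P$-matrix of the first category with respect to $J$.

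I expect the only real friction to be bookkeeping: confirming that $I_{P_J}$ (rather than the ``opposite'' choice $I_{e^J}$ implicit in Lemma~\ref{rohn_exten2}) is the matrix for which the one-sided estimate points the right way on $\mathbb{J}$, and then tracking the four block signs of $(I_{P_J})^{-1}$ correctly through the identity $x=(I_{P_J})^{-1}(I_{P_J}x)$ in the verification of~(a). This is precisely where the first-category case behaves differently from the uniform situation in Theorems~\ref{n hull 2nd} and~\ref{n hull 1st}, so it is worth being careful that no sign gets flipped.
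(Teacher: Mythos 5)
Your proposal is correct and follows essentially the same route as the paper: the ``only if'' direction by membership of $I_z$ and $I_{P_J}$ in $\mathbb{I}(A,B)$, and the ``if'' direction by verifying conditions (a)--(c) of Theorem~\ref{alm-p-1st} for each $C\in\mathbb{I}(A,B)$, with $I_{P_J}$ handling (a) and (c) and the matrices $I_z$ together with Lemma~\ref{rohn_exten2} handling (b). Your one-sided estimate $x_i(Cx)_i\le x_i(I_{P_J}x)_i$ for $x\in\mathbb{J}$ is just a componentwise repackaging of the blockwise inequalities $(Cx)_J\le(I_{P_J}x)_J$ and $(Cx)_{J^\complement}\ge(I_{P_J}x)_{J^\complement}$ that the paper reads off from $I_l\le C\le I_u$, and your sign bookkeeping for $(I_{P_J})^{-1}$ in the verification of (a) agrees with the paper's.
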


\begin{proof}
    Let $\mathbb{I}(A,B)$ be an almost $P$-matrix of the first category with respect to $J$.   Then the matrices $I_{P_J}$ and $I_z$ are almost $P$-matrices of the first category with respect to $J$

    Conversely, suppose that the matrices $I_{P_J}$ and $I_z$ are almost $P$-matrices of the first category with respect to $J$  for all $z\in \{ \pm 1 \}^n$. From the definition, $I_{P_J}=\begin{pmatrix}
            {(I_u)}_{JJ} & {(I_l)}_{J J^\complement} \\
            {(I_l)}_{J^\complement J} & {(I_u)}_{J^\complement  J^\complement}\\
    \end{pmatrix}$. Let $C \in \mathbb{I}(A,B)$. First let us show that $N(C) \cap  \mathbb{J}= \emptyset$. Let $x \in N(C) \cap  \mathbb{J}$ and $y=I_{P_J}x$. Then  $x_J>0$ and $x_{J^\complement}<0$. Now,  $(I_u)_{JJ} x_{J} + (I_l)_{J J^\complement}x_{J^\complement} \geq  C_{JJ} x_{J} + C_{J J^\complement}x_{J^\complement}$, $(I_l)_{J^\complement J} x_{J} + (I_u)_{J^\complement J^\complement}x_{J^\complement} \leq  C_{J^\complement J} x_{J} + C_{J^\complement J^\complement}x_{J^\complement}$ and $Cx=0$ imply that $y_J\geq0$ and $y_{J^\complement}\leq 0$. Since ${(I^{-1}_{P_J})}_{JJ}<0, {(I^{-1}_{P_J})}_{J^\complement J^\complement} < 0$,  ${(I^{-1}_{P_J})}_{JJ^\complement }>0$ and  $ {(I^{-1}_{P_J})}_{J^\complement J }>0$, so ${(I^{-1}_{P_J}y)}_J=x_J\leq  0$ and ${(I^{-1}_{P_J}y)}_{J^\complement}=x_{J^\complement} \geq 0$, a contradiction. Thus $N(C) \cap  \mathbb{J}= \emptyset$.

    Since $I_{P_J}$ is an almost $P$-matrix of the first category with respect to $J$, there exists a vector $x\in \mathbb{J}$ such that ${I_{P_J}x} \in -\mathbb{J}$. 
     Thus ${(Cx)}_J<0$ and ${(Cx)}_{J^\complement}>0$.
    Hence $C (\mathbb{J}) \cap  (-\mathbb{J}) \neq \emptyset$.

      Let $x \in \mathbb{R}^n$ be  such that $x_i(Cx)_i\leq 0$ for $i \in \langle n \rangle$. By Lemma \ref{rohn_exten2}, there exists a vector $z\in \{ \pm 1 \}^n$ such that $x_i(I_z x)_i\leq x_i (C x)_i\leq 0$, for $i\in \langle n \rangle$. Since $I_z$ is an almost $P$-matrix of the first category with respect to $J$, by Theorem \ref{alm-p-1st}, $x=0$, if $x_k = 0$ for some $k$,  otherwise $x\in \mathbb{J}$ or $x \in -\mathbb{J}$. Hence, by Theorem \ref{alm-p-1st}, $C$ is an almost $P$-matrix of the first category with respect to $J$.
\end{proof}
\begin{rem}
	Theorem \ref{unieqn2} has analogue for almost $P$-matrices of either category with appropriate choices of $x$. We leave the details to the interested reader.
\end{rem}

    \section{Rsesults for semipositive matrices}\label{sec-semi}
In this section we characterize the interval hull for semipositive and minimally semipositive matrices (see Definition \ref{maindefn}). In \cite{CKS18}, the authors studied the interval hull $\mathbb{I}(A,B)$ of minimally semipopsitive matrices, where $A\leq B$. In this article, we provide a short and elementary proof.
\begin{theorem}\label{int-semi}
    Let $A,B \in \mathbb{R}^{m \times n}$. Then we have the following:
 \begin{itemize}
        \item[(a)] $\mathbb{I}(A,B)$ is semipositive  if and only if $I_l$ is a semipositive matrix.
        \item[(b)] $\mathbb{I}(A,B)$ is minimally semipositive if and only if $I_l$ is semipositive and $I_u$ is minimally semipositive.
\end{itemize}

\end{theorem}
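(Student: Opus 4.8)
The plan is to treat parts (a) and (b) in turn, in each case reducing the "interval hull" statement to a condition on the two extreme matrices $I_l$ and $I_u$ by exploiting entrywise monotonicity of the semipositivity condition $Ax > 0$ with $x \geq 0$. For part (a), the forward direction is immediate from Lemma~\ref{int-lem1}, since $I_l \in \mathbb{I}(A,B)$. For the converse, suppose $I_l$ is semipositive, so there is $x \geq 0$ with $I_l x > 0$. Then for any $C \in \mathbb{I}(A,B)$ we have $C \geq I_l$ entrywise, and since $x \geq 0$ this gives $Cx \geq I_l x > 0$; hence every $C \in \mathbb{I}(A,B)$ is semipositive with the \emph{same} witnessing vector $x$. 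This is the whole argument for (a).

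For part (b), recall that $C$ is minimally semipositive iff $C$ is semipositive and no column-deleted submatrix of $C$ is semipositive; equivalently (this is the standard characterization I would cite or quickly reprove) $C$ is minimally semipositive iff $C$ is semipositive and $C$ has a nonnegative left inverse, or in the spirit of this paper: $C$ is semipositive and there is no nonzero $x \geq 0$ with $Cx \leq 0$ other than what semipositivity already forbids — more precisely, $C$ minimally semipositive iff $C$ semipositive and for every nonzero $y \geq 0$, $Cy \not\leq 0$ fails only trivially. The cleanest route: $C$ is minimally semipositive iff $C$ is semipositive and the only $y \geq 0$ with $Cy \leq 0$ is $y = 0$. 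The forward direction of (b) is again immediate from Lemma~\ref{int-lem1}: $I_l, I_u \in \mathbb{I}(A,B)$, so if the whole hull is minimally semipositive then in particular $I_l$ is semipositive and $I_u$ is minimally semipositive.

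For the converse of (b), assume $I_l$ is semipositive and $I_u$ is minimally semipositive, and fix $C \in \mathbb{I}(A,B)$, so $I_l \leq C \leq I_u$. By part (a), $C$ is semipositive. To show $C$ is minimally, suppose $y \geq 0$, $y \neq 0$, and $Cy \leq 0$; I must derive a contradiction. Since $y \geq 0$ and $C \leq I_u$ we have $I_u y \geq C y$, which does \emph{not} immediately help, so instead use $C \geq I_l$: we get $I_l y \leq C y \leq 0$. The key step is then: $I_u y \le 0$ as well. Indeed $C \le I_u$ gives $C y \le I_u y$, so I actually need the other bound — here is where I would use that a minimally semipositive $I_u$ has a nonnegative left inverse $R \geq 0$ with $R I_u = \mathrm{Id}_n$. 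Wait: that forces $I_u$ square; in the rectangular case minimal semipositivity forces $m \geq n$ and $I_u$ has full column rank with a nonnegative left inverse. Then from $I_l y \le Cy \le 0$ and $y \ge 0$, combined with $C \le I_u$ hence... I reduce to showing $I_u$ cannot be minimally semipositive if such a $y$ exists. Concretely: if $Cy \le 0$ with $y \ge 0$, $y \ne 0$, let $S = \{j : y_j > 0\}$; deleting the columns outside $S$ from $C$ and restricting to $y_S > 0$ shows the column-deleted submatrix $C_{:,S}$ satisfies $C_{:,S} y_S = Cy \le 0$. I then transfer this to $I_u$: since $I_u \ge C$ does not give $I_u{}_{:,S} y_S \le 0$, the correct move is to instead work with $I_l$ on the positive side and observe that minimal semipositivity of $I_u$, together with $I_l \le I_u$, forces the relevant column-deleted piece of $I_u$ to still be semipositive — contradicting minimality of $I_u$.

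The main obstacle is precisely this last transfer: showing that a forbidden vector $y$ for $C$ produces a forbidden configuration for $I_u$ (not $I_l$), since the inequalities $I_l \le C \le I_u$ push $Cy$ in the convenient direction only against $I_l$. The resolution I anticipate is to use the characterization of minimal semipositivity via \emph{both} a semipositive witness and the nonexistence of a nonzero nonnegative vector in (a suitable face of) the kernel-cone; specifically, $C$ minimally semipositive $\iff$ $C$ semipositive \emph{and} $I_u$ (being $\geq C$) has no column-deleted submatrix that is semipositive, because a column-deleted submatrix of $C$ being semipositive would, via the entrywise bound $C \le I_u$ applied columnwise to the deleted submatrices, make the corresponding column-deleted submatrix of $I_u$ semipositive too — using again "if $M \le M'$ columnwise and $M$ is semipositive with witness $x \ge 0$ then $M'x \ge Mx > 0$." Running this equivalence in both directions closes the argument: $C$ has a semipositive column-deleted submatrix $\Rightarrow$ $I_u$ does $\Rightarrow$ $I_u$ not minimally semipositive, contradiction; hence $C$ is minimally semipositive. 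I would write the proof in this order — (a) first, then the two directions of (b), isolating the columnwise-monotonicity observation as the single reusable lemma — and flag the rectangular case $m \ge n$ only where full column rank is implicitly needed.
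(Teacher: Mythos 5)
Your final argument is correct and is essentially the paper's proof: part (a) is the same monotonicity observation ($Cx \geq I_l x$ for $x \geq 0$ with the single witness coming from $I_l$), and the closing paragraph of your part (b) — a semipositive column-deleted submatrix of $C$ gives a nonzero $x \geq 0$ with a zero entry and $Cx>0$, whence $I_u x \geq Cx > 0$ and the corresponding column-deleted submatrix of $I_u$ is semipositive, contradicting minimality of $I_u$ — is exactly the paper's two-line argument.

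You should, however, delete the detour in the middle of (b). The ``cleanest route'' characterization you assert, that $C$ is minimally semipositive iff $C$ is semipositive and the only $y \geq 0$ with $Cy \leq 0$ is $y=0$, is false: only the forward implication holds (via the nonnegative left inverse), while $C = \begin{pmatrix} 1 & 1 \end{pmatrix}$ is semipositive and admits no nonzero $y \geq 0$ with $Cy \leq 0$, yet it is not minimally semipositive because its column-deleted submatrix $(1)$ is. Consequently the attempted contradiction starting from ``$y \geq 0$, $y \neq 0$, $Cy \leq 0$'' proceeds from the wrong negation of minimal semipositivity, which is why the inequalities seemed to push in the unhelpful direction. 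The correct negation is precisely the one you use at the end (some column-deleted submatrix of $C$ is semipositive), and once you phrase it that way the left-inverse/full-column-rank discussion and the remark about $m \geq n$ are unnecessary; the transfer to $I_u$ is immediate from $I_u \geq C$ and $x \geq 0$.
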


\begin{proof}
    \textbf{(a)} If $\mathbb{I}(A,B)$ is semipositive, then $I_l$ is also semipositive. Conversely, suppose that $I_l$ is semipositive. Then, there exists $x\geq 0$ such that $I_lx>0$. Let $C \in \mathbb{I}(A,B)$. Then $0< I_l x\leq Cx$. Thus $\mathbb{I}(A,B)$ is semipositive.

    \textbf{(b)} Suppose that $I_l$ is semipositive and $I_u$ is minimally semipositive. Then $\mathbb{I}(A,B)$ is semipositive. Suppose
    $C \in\mathbb{I}(A,B)$ is not minimally semipositive. Then there exists a  nonnegative nonzero vector $x$ with at least one zero entry such that $Cx>0$. Since $I_u\geq C$, so $I_u x>0$, a contradiction. Thus $\mathbb{I}(A,B)$ is minimally semipositive.
\end{proof}

It is known that a square matrix $A$ is minimally semipositive if and
only if $A$ is invertible and $A^{-1}\geq 0$. More generally,  an $m \times n$ matrix $A$ is minimally semipositive  if and only if $A$ is semipositive and $A$ has a nonnegative left inverse \cite{john-ker-stan-semi}. This leads to the following result:

\begin{theorem}[{\cite[Theorem 25.4]{kras-lif-sob}}]\label{kras}
Let $B,C \in \mathbb{R} ^{n \times n}$ such that $C \leq B$, $B$ is
invertible, and $B^{-1}\geq 0$.
Then $C^{-1} \geq 0$ if and only if $\inte(\mathbb{R}_+^n) \cap C\mathbb{R}_+^n \neq \emptyset$.
\end{theorem}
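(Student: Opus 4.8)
The plan is to split into the two implications, noting that only the \emph{if} direction uses the hypotheses $C\le B$ and $B^{-1}\ge 0$.

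For the \emph{only if} direction I would argue directly: assuming $C$ is invertible with $C^{-1}\ge 0$, pick any strictly positive vector $v$ (say $v=e$) and set $x:=C^{-1}v$. Then $x\ge 0$ because $C^{-1}\ge 0$ and $v\ge 0$, while $Cx=v\in\inte(\mathbb{R}_+^n)$, so $v\in C\mathbb{R}_+^n\cap\inte(\mathbb{R}_+^n)$. This does not use the hypotheses on $B$ at all.

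For the \emph{if} direction, fix $v\in\inte(\mathbb{R}_+^n)$ and $x\in\mathbb{R}_+^n$ with $Cx=v$. The idea is a regular-splitting argument: set $N:=B-C\ge 0$, so that $C=B(I-M)$ with $M:=B^{-1}N$, which is nonnegative as a product of nonnegative matrices. From $Cx=v$ one gets $(I-M)x=B^{-1}v=:w$, and $w>0$ because $B^{-1}$ is invertible (hence has no zero row), is nonnegative, and $v>0$. Then $x=Mx+w$ forces $x>0$ (since $Mx\ge 0$), and hence $Mx=x-w<x$ entrywise. The crux is to deduce $\rho(M)<1$ from this: taking a Perron left eigenvector $p\ge 0$, $p\ne 0$, of $M$ gives $(1-\rho(M))\,p^\top x=p^\top w$, and since $p^\top x>0$ and $p^\top w>0$ this forces $\rho(M)<1$. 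Consequently $I-M$ is invertible with $(I-M)^{-1}=\sum_{k\ge 0}M^k\ge 0$, so $C=B(I-M)$ is invertible and $C^{-1}=(I-M)^{-1}B^{-1}\ge 0$, being a product of two nonnegative matrices.

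The only real content here is the estimate $\rho(M)<1$; everything else is nonnegativity bookkeeping around the splitting $C=B(I-M)$. The step I would be most careful about is upgrading $Mx\le x$ to the \emph{strict} inequality $Mx<x$, since that is precisely what makes $\rho(M)<1$ work — and it rests on $w>0$, hence on the invertibility of $B^{-1}$, not merely on $B^{-1}\ge 0$. An alternative to the Perron-eigenvector step is the Collatz--Wielandt bound $\rho(M)\le\max_i (Mx)_i/x_i$, which is $<1$ here because $x>0$ and $Mx<x$.
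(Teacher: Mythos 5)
Your proof is correct, but it follows a different route from the paper. The paper obtains this theorem as a consequence of its Theorem \ref{int-semi}(b) on interval hulls of minimally semipositive matrices: since $C\leq B$, one has $I_l=C$ and $I_u=B$ for the hull $\mathbb{I}(C,B)$; the hypothesis $\inte(\mathbb{R}_+^n)\cap C\mathbb{R}_+^n\neq\emptyset$ says exactly that $C=I_l$ is semipositive, while $B$ invertible with $B^{-1}\geq 0$ says $I_u=B$ is minimally semipositive (by the Johnson--Kerr--Stanford characterization), so the whole hull, and in particular $C$, is minimally semipositive, i.e.\ invertible with $C^{-1}\geq 0$. The core of that argument is a one-line order-theoretic contradiction: if $C$ were semipositive but not minimally so, there would be $x\geq 0$, $x\neq 0$, with a zero entry and $Cx>0$, whence $Bx\geq Cx>0$, contradicting minimal semipositivity of $B$; no spectral theory is used, which is why the paper calls its proof ``elementary,'' and the same argument covers rectangular matrices. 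You instead run the classical regular-splitting argument: $C=B(I-M)$ with $M=B^{-1}(B-C)\geq 0$, the strict inequality $Mx<x$ for the witness $x$ (correctly traced back to $w=B^{-1}v>0$, which needs invertibility of $B$ and not just $B^{-1}\geq 0$), then $\rho(M)<1$ via a left Perron vector or Collatz--Wielandt, and finally $C^{-1}=\left(\sum_{k\geq 0}M^k\right)B^{-1}\geq 0$ via the Neumann series. Your version is self-contained modulo Perron--Frobenius and delivers invertibility of $C$ directly, whereas the paper's version trades the spectral machinery for the cited characterization of minimal semipositivity and gains the rectangular generalization; both are complete proofs of the stated equivalence, including your observation that the ``only if'' direction needs no hypothesis on $B$.
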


Indeed, in \cite{kras-lif-sob}, the authors proved the above theorem for
any normal and solid cone in $\mathbb{R}^n$. Now, it is clear that part
(b) of Theorem \ref{int-semi} is an extension of Theorem \ref {kras} for
rectangular matrices. We also observe that our argument provides an
alternate, simpler, and elementary proof for Theorem \ref{kras}.

\section*{Acknowledgements}
P.N. Choudhury was supported by National Post-Doctoral
Fellowship(PDF/2019/000275), the SERB, Department of Science and
Technology, India, and the NBHM Post-Doctoral Fellowship
(0204/11/2018/R$\&$D-II/6437) from DAE (Govt. of India). M.R. Kannan
would like to thank  the SERB, Department of Science and Technology,
India, for financial support through the projects  MATRICS
(MTR/2018/000986) and Early Career Research Award (ECR/2017/000643).

\bibliographystyle{plain}
\bibliography{pro-raj}
\end{document}